\documentclass[12pt]{amsart}

\usepackage[T1]{fontenc}
\usepackage[utf8]{inputenc}
\usepackage{lmodern}
\usepackage{microtype}
\usepackage{amsmath,amssymb,amsthm,mathtools}
\usepackage{enumitem}
\usepackage{booktabs}
\usepackage{hyperref}
\usepackage[nameinlink,capitalize]{cleveref}
\usepackage[margin=3.2cm, marginpar=2.7cm]{geometry}
\usepackage{comment}
\usepackage{xcolor}

\hypersetup{colorlinks=true, linkcolor=red, citecolor=blue, urlcolor=blue}

\newtheorem{theorem}{Theorem}[section]
\newtheorem{proposition}[theorem]{Proposition}
\newtheorem{lemma}[theorem]{Lemma}
\newtheorem{corollary}[theorem]{Corollary}
\theoremstyle{definition}
\newtheorem{definition}[theorem]{Definition}
\theoremstyle{remark}
\newtheorem{remark}[theorem]{Remark}

\newcommand{\del}{\partial}
\newcommand{\Z}{\mathbb{Z}}
\newcommand{\C}{\mathbb{C}}
\newcommand{\R}{\mathbb{R}}
\newcommand{\Sing}{\operatorname{Sing}}
\newcommand{\rk}{\operatorname{rk}}

\newcommand{\CP}{\mathbb{CP}^2}

\DeclareMathOperator{\im}{im}

\title[Topological inequalities for conic-line arrangements]
{Topological inequalities for conic-line arrangements with some prescribed singularities}

\author{Marco Golla and Piotr Pokora}
\address{Department of Mathematics, University of the National Education Commission Krakow, Poland.}
\email{piotr.pokora@uken.krakow.pl}
\address{CNRS and Institut de Math\'ematiques de Marseille, Aix-Marseille Universit\'e, France.}
\email{marco.golla@univ-nantes.fr}

\date{\today}

\begin{document}

\begin{abstract}
We study arrangements of conics and lines in $\mathbb{CP}^2$ from the viewpoint of 4-manifolds. 
We introduce combinatorial conic-line arrangements, the conic-line counterpart of rank-3 matroids, together with their topological and smooth realisations, and we ask which restrictions on a complex arrangement survive when its components are replaced by locally-flat spheres in the same homology classes with the same local singularity models.

For an ADE combinatorial conic-line arrangement of even total degree admitting a topological realisation we prove a Hirzebruch-type inequality;
the proof uses only branched covers and the local topology of rational double points.
In the complex-algebraic category we show that equality holds exactly when the arrangement is a maximising curve, hence free with prescribed exponents.
We then introduce divisible and odd combinatorial conic-line arrangements and obtain inequalities and congruences for the multiplicity sequence of their resolutions, via cyclic branched covers, the $G$-signature theorem, spin structures, and Furuta's $10/8$-theorem.
\end{abstract}

\maketitle

\section{Introduction}

Arrangements of curves in the complex projective plane provide a natural meeting point of algebraic geometry, topology, and combinatorics.

The goal of this paper is to explore to what extent results on arrangements of plane projective curves extend to a more general setup.
We focus mostly (but not solely) on arrangements of lines and conics.

Following earlier work of Ruberman and Starkston~\cite{RL} and of the first author~\cite{G-odd} and Aceto~\cite{AcetoGolla} on line arrangements, we define combinatorial, topological, and smooth conic-line arrangements, and we extend a number of known results in algebraic geometry to this setup.
We will mostly focus on conic-line arrangement with fairly restricted singularities: ADE (or simple) and ordinary multiple points.

We have four main results, each of which is essentially known in the algebro-geometric category or in the case of line arrangements.

\begin{theorem}\label{thm:milnor-bound}
Let $\mathcal A$ be a combinatorial ADE conic-line arrangement of even total degree $D=2m$ that
admits a topological realisation. Then
\begin{equation}\label{eq:milnor-bound}
 \sum_{p\in\Sing(\mathcal A)}\mu_p\leq 3m^2-3m+1,
\end{equation}
where $\mu_p$ is the Milnor number of the singularity at $p$.
\end{theorem}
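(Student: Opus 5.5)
The plan is to compare the Milnor numbers with the negative part $b_2^-$ of the intersection form of a double cover of $\CP$ branched along a smoothing of the arrangement. Let $\Sigma_0=\bigcup C_i\subset\CP$ be a topological realisation of $\mathcal A$. Its components are locally-flat spheres of total homology class $2m[\mathbb{CP}^1]$, and near each $p\in\Sing(\mathcal A)$ the pair is homeomorphic to the cone on the link of the prescribed ADE curve singularity $f_p$. The proof has three steps.

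First, I smooth the branch locus locally. In a small ball $B_p$ around each singular point, the pair $(B_p, B_p\cap\Sigma_0)$ is the cone on the link of $f_p$. I replace the cone by the Milnor fibre $F_p=\{f_p=\varepsilon\}\cap B_p$, which has the same boundary. This gives a closed locally-flat surface $\Sigma$ in the class $2m[\mathbb{CP}^1]$.

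Its Euler characteristic is computed exactly as for an algebraic smoothing. Replacing the cone (a disjoint union of $r_p$ discs, where $r_p$ is the number of local branches) by $F_p$ changes $\chi$ by $\chi(F_p)-r_p = 1-\mu_p-r_p$. Summing this over all singular points and using the combinatorics of the arrangement (degrees of the components and the local data), I expect to obtain $\chi(\Sigma)=2-(2m-1)(2m-2)$. Connectedness of $\Sigma$ should follow because every pair of components meets, since their homology classes have positive intersection. So $\Sigma$ has the genus of a smooth plane curve of degree $2m$; one can also check this via the genus formula $g=\binom{2m-1}{2}$ together with $\delta_p=(\mu_p+r_p-1)/2$.

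Second, I form the double cover $Y\to\CP$ branched along $\Sigma$. This exists because $[\Sigma]$ is divisible by $2$ and $H_1(\CP)=0$. For a locally-flat branch surface, the standard formulas still hold topologically: $\chi(Y)=2\chi(\CP)-\chi(\Sigma)$, and $\sigma(Y)=2\sigma(\CP)-\tfrac12[\Sigma]^2=2-2m^2$. The latter is the Rokhlin/$G$-signature formula, which is valid for locally flat surfaces. I also check $b_1(Y)=0$, for instance via the transfer together with the fact that the complement has $H_1=\Z/2m$.

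These give $b_2(Y)=4m^2-6m+4$. Hence $b_2^+(Y)=m^2-3m+3$ and $b_2^-(Y)=3m^2-3m+1$, which is precisely the right-hand side of \eqref{eq:milnor-bound}.

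Third, I find a large negative definite sublattice inside $H_2(Y)$. Over each $B_p$, the preimage in $Y$ is the double cover of $B_4$ branched along $F_p$, which is the Milnor fibre $M_p$ of the surface singularity $z^2=f_p(x,y)$. Since $f_p$ is ADE, this surface singularity is a rational double point of the same type. Its Milnor fibre is diffeomorphic to the minimal resolution, and its intersection form is the negative definite ADE root lattice of rank $\mu_p$. The $M_p$ are pairwise disjoint, so $H_2$ of their union injects into $H_2(Y)$ with intersection form $\bigoplus_p(-\Lambda_p)$. This is negative definite of rank $\sum_p\mu_p$; injectivity holds because a nondegenerate form cannot have classes in its kernel. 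Therefore $\sum_p\mu_p\le b_2^-(Y)=3m^2-3m+1$.

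I expect the main obstacle to be the purely topological justification of the first two steps. The local surgery from the cone to the Milnor fibre has to be compatible with the locally-flat realisation, which is where the prescribed local singularity models are used. The branched-cover formulas for $\chi$, $\sigma$ and $b_1$ also have to be justified in the topological category. For the signature I would cite the topological $G$-signature theorem (or Rokhlin's formula); the rest is bookkeeping.
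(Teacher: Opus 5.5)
Your proposal is correct and is essentially the paper's proof. You smooth each ADE point by its Milnor fibre, take the double cover branched over the smoothing, use $b_1=0$ and the locally-flat $G$-signature formula to get $b_2^-=3m^2-3m+1$, and embed the disjoint negative-definite ADE lattices into $H_2$. The only differences are cosmetic: you compute $\chi(\Sigma)$ exactly via $\delta_p=(\mu_p+r_p-1)/2$, which is the check the paper leaves to the reader, and you get injectivity from nondegeneracy of the local forms rather than from Mayer--Vietoris.

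One small correction: the transfer only controls the deck-invariant part of $H_1(Y;\mathbb{Q})$. So $b_1(Y)=0$ should be attributed to the Goldschmidt-type argument using $H_1(\CP\setminus\Sigma;\Z/2)\cong\Z/2$, as in the paper.
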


For conic-line arrangements, we can re-write the inequality in the following way.

\begin{corollary}
    Let $\mathcal{A}$ be an ADE combinatorial conic-line arrangement that admits a topological realisation, with $\deg\mathcal{A}$ even. Let $a_s$ and $d_s$ be the number of singularities of $\mathcal{A}$ of type $A_s$ and $D_s$, respectively.
    Then:
    \[
    a_5 + 2a_7 + d_8 + 2d_{10} + \frac{3\ell}2 \le 2 + a_1 + d_4.
    \]
\end{corollary}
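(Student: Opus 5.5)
The plan is to substitute into \cref{thm:milnor-bound} and eliminate $m$ using the fact that the $\delta$-invariants of the singular points add up to the pairwise intersection numbers of the components. I will write $k$ for the number of conics and $\ell$ for the number of lines, so $D=2k+\ell=2m$; in particular $\ell$ is even.

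\emph{Step 1: which singularities occur.} Every component is a smooth conic or line, so every local branch at a singular point is smooth. Among the ADE singularities, this excludes $A_{2j}$, $D_{2j+1}$, $E_6$, $E_7$ and $E_8$, since each of these has a cuspidal branch. So only $A_{2j-1}$ (two smooth branches with intersection multiplicity $j$) and $D_{2j}$ (three smooth branches, two of them tangent to order $j-1$) remain. Since two components meet with multiplicity at most $4$, we get $s\le 7$ for $A_s$ and $s\le 10$ for $D_s$. I would rely on the definition of a combinatorial ADE conic-line arrangement here, so that these local models are prescribed.

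\emph{Step 2: the local invariants.} Milnor's formula $\mu_p=2\delta_p-r_p+1$, with $r_p$ the number of branches, gives:
\begin{itemize}
\item for $A_{2j-1}$: $\delta=j$, $r=2$, $\mu=2j-1$;
\item for $D_{2j}$: $\delta=j+1$, $r=3$, $\mu=2j$.
\end{itemize}
In a topological realisation the components are locally-flat spheres in the homology classes of a conic or a line. Hence $\sum_p\delta_p=\sum_{i<j}C_i\cdot C_j=\tfrac12\bigl(D^2-4k-\ell\bigr)$, because the local intersection numbers at $p$ add up to $\delta_p$ when all branches are smooth. This gives $2m^2=\sum_p\delta_p+2k+\ell/2$ and $3m=3k+3\ell/2$.

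\emph{Step 3: substitution.} Inserting these into \eqref{eq:milnor-bound}, the right-hand side becomes
\[
3m^2-3m+1=\tfrac32\textstyle\sum_p\delta_p+3k+\tfrac34\ell-3k-\tfrac32\ell+1=\tfrac32\sum_p\delta_p-\tfrac34\ell+1.
\]
Doubling, the inequality reads
\[
\sum_p\bigl(2\mu_p-3\delta_p\bigr)\le 2-\tfrac32\ell.
\]
The contribution $2\mu_p-3\delta_p$ equals $j-2$ for $A_{2j-1}$ and $j-3$ for $D_{2j}$. Concretely these are $-1,0,1,2$ for $A_1,A_3,A_5,A_7$ and $-1,0,1,2$ for $D_4,D_6,D_8,D_{10}$. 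Moving the negative terms to the right gives exactly the stated inequality.

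\emph{Main obstacle.} The only delicate point is justifying, in the topological category, that $\delta_p$ equals the sum of the local intersection numbers of the smooth branches at $p$. I would handle this using the local singularity models being the complex ones, so that the local intersection numbers are the algebraic ones. Adding these over all $p$ then gives the homological intersection numbers $C_i\cdot C_j$ by positivity of the local intersections in the prescribed models.
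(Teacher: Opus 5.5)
Your proof is correct and is essentially the paper's argument: substitute into \eqref{eq:milnor-bound} a linear identity that eliminates $m^2$. The paper obtains that identity from the Euler-characteristic equality $\sum_i\chi(F_i)-n+\sum_p(1-\mu_p)=3D-D^2$ with $n=2\sum_s a_s+3\sum_s d_s$, which is equivalent via $\mu_p=2\delta_p-r_p+1$ to your identity $\sum_p\delta_p=\sum_{i<j}C_i\cdot C_j$.

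Your ``main obstacle'' is not a real difficulty. Here $\delta_p=\sum_{C<C'}I(C,C',p)$, and the totals $\sum_p I(C,C',p)\in\{4,2,1\}$ are prescribed by the axioms of a combinatorial conic-line arrangement, so no topological input is needed for that step.
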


We then introduce divisible and odd combinatorial conic-line arrangements, and we prove two theorems, generalising the results from~\cite{AcetoGolla} to from lines to conics and lines.
Here we state them in the simplified setup of arrangements with only ordinary multiple points.

\begin{theorem}\label{thm:divisible-weak}
    Let $q > 1$ be a prime power and $\mathcal{A}$ a $q$-divisible combinatorial conic-line arrangement with only ordinary multiple points. 
    Call $t_m$ the number of multiple points of multiplicity $m$ for $m \ge 2$.
    If $\mathcal{A}$ is \emph{topologically} realisable, then either $\mathcal{A}$ is a pencil or
    \[
    \sum_{m \ge 2}\left(t_m-\frac{6q}{q+1}\right)\leq 4k+\ell.
    \]
\end{theorem}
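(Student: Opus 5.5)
This follows Aceto--Golla's argument for line arrangements, run with conics as well. Let $k$ be the number of conics and $\ell$ the number of lines, so that $D=2k+\ell$. Fix a topological realisation $\Sigma=\bigcup_i\Sigma_i\subset\CP$ by locally-flat spheres with ordinary multiple points. Blow up every multiple point of multiplicity $m\ge 3$ (and possibly also the double points) to get $X=\CP\#N\overline{\CP}$. In $X$ we take the total transform with suitable weights, so that the resulting divisor $\widetilde\Sigma$ is homologically divisible by $q$. $q$-divisibility of $\mathcal{A}$ should be exactly the combinatorial condition that lets us assign weights to the strict transforms and the exceptional spheres so that the weighted class is divisible by $q$. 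The class is $\sum w_i[\widetilde\Sigma_i]+\sum_p w_p[E_p]$, where the weight $w_p$ of $E_p$ equals the sum of the weights of the components through $p$; since the arrangement is not a pencil, these weights can be chosen nonzero mod $q$. The weighted divisor is then a configuration of disjointly or transversally meeting spheres.

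Next take the $q$-fold cyclic branched cover $Y\to X$ along this weighted configuration. Here $q$ is a prime power, so $Y$ is a $\Z/q$-homology-like object with $b_1(Y)=0$ by the usual Smith-theory/Fox argument. Compute three things:
\begin{enumerate}[label=(\roman*)]
\item $\chi(Y)=q\chi(X)-(q-1)\chi(\widetilde\Sigma)$, where $\chi(\widetilde\Sigma)$ is computed from the number of components and intersections.
\item $\sigma(Y)$, via the $G$-signature theorem: $\sigma(Y)=q\sigma(X)-\frac{q^2-1}{3q}\,[\widetilde\Sigma]^2_{\mathrm{weighted}}$, with appropriate corrections at the nodes of the branch locus, where two weighted spheres meet.
\item $b_2^\pm(Y)$, from these via $b_2^++b_2^-=\chi(Y)-2$.
\end{enumerate}
The key inequality is then a bound on $b_2^+(Y)$ from below, or $b_2^-$ from above. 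Following Aceto--Golla, use the $\Z/q$-eigenspace decomposition of $H_2(Y)$: the invariant part is $H_2(X)$, and each nontrivial eigenspace has the same signature computed by the $G$-signature defect. Requiring $|\sigma_\zeta|\le \dim$ for each eigenspace yields a linear inequality in the $t_m$, $k$, $\ell$.

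The weights of the $t_m$ terms should come out as $\frac{6q}{q+1}$. Plugging in $\chi(X)=3+N$, $\sigma(X)=1-N$, the self-intersections $[\widetilde\Sigma_i]^2=\deg^2-\#\{\text{blown up points on }\Sigma_i\}$, and the node counts $\sum_m\binom m2 t_m$ adjusted by blowups, then simplifying with the combinatorial identity $\binom{D}{2}-k\cdot 0$ and conic–line intersection counts (conic–conic $4$, conic–line $2$, line–line $1$: $\sum_m\binom m2t_m=\binom{\ell}2+2k\ell+4\binom k2$) should produce $\sum_m(t_m-\frac{6q}{q+1})\le 4k+\ell$.

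The main obstacle is the bookkeeping in the $G$-signature step. There are two issues: getting the right local contribution at each node of the branch locus with weights $(a,b)$, which is a Dedekind-sum-type term, and choosing weights so that these sum to something expressible purely through $t_m$. I would first try the simplest choice: all components of $\mathcal{A}$ get weight $1$, and the exceptional curves get weight $m$, dropping those with $q\mid m$ from the branch locus. I would then check that the contribution per blown-up point is a constant $\frac{6q}{q+1}$-type quantity independent of $m$. The pencil must be excluded, since there the cover degenerates and the eigenspaces can vanish.
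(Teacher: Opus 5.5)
There is a genuine gap, and it sits where you wave your hands: the claim that the cover has $b_1=0$ ``by the usual Smith-theory/Fox argument''. With your simplest weights and $q$-divisibility, every exceptional curve has weight $m_p\equiv 0 \pmod q$. So once you blow up \emph{every} multiple point, nodes included (as the paper does, and as the count $N$ in the inequality presupposes), you are taking the $q$-fold cover branched over the disjoint union of the $r=k+\ell$ strict transforms. There are no nodes in the branch locus, so the Dedekind-sum issue never arises.

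For a disconnected branch locus, however, $q$ being a prime power does not give $b_1=0$. The Goldschmidt/Smith argument needs $H_1$ of the complement with $\Z/p$ coefficients to be $\Z/p$. Here its dimension is the number of independent mod-$p$ relations among the classes $[\widetilde\Sigma_i]$. For example, take three smooth conics $C_1,C_2,C_3$ of the pencil through four general points $P_1,\dots,P_4$, together with the lines $P_1P_2$ and $P_3P_4$. This is a $2$-divisible non-pencil with multiplicity sequence $(4,4,4,4,2)$. The double cover branched over its five strict transforms fibres over the elliptic curve that double covers $\mathbb{CP}^1$ at the four points corresponding to $C_1$, $C_2$, $C_3$ and $P_1P_2\cup P_3P_4$, so it has $b_1=2$. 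Inserting $b_1=0$ into your Euler characteristic count (branch locus of Euler characteristic $2r=10$) would give $b_2^+=-1$.

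The missing idea is the paper's: tube the pairwise disjoint strict transforms into a single sphere $S$ with $[S]=Dh-\sum_j m_je_j$ and $S^2=4k+\ell-\sum_j m_j$ (Lemma~\ref{l:selfintersection}). Then $H_1(Y\setminus S)$ is cyclic of order divisible by $q$, the Goldschmidt lemma gives $b_1=0$, and one gets $b_2=q(N+1)$ and $\sigma=q(1-N)-\frac{q^2-1}{3q}S^2$. Without tubing, $b_1$ depends on the mod-$q$ relations among the components and must be controlled separately (compare the remark following Theorem~\ref{t:random}).

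Second, the inequality you plan to use points the wrong way, and you never identify its source. The obvious bounds of the kind you name, $b_2^+\ge 1$ and $b_2^-\le b_2$, are vacuous once $S^2<0$. What the paper uses is the \emph{lower} bound $b_2^-(X)\ge N$. It comes from the pullback of $\langle e_1,\dots,e_N\rangle$, which is injective over $\mathbb{Q}$ by the transfer and negative definite of rank $N$ (its form is multiplied by $q$). Combined with $2b_2^-=b_2-\sigma$ and the values above, this is exactly $(q+1)S^2+6qN\ge 0$, that is, $\sum_j\bigl(m_j-\frac{6q}{q+1}\bigr)\le 4k+\ell$. This is the intended reading of the displayed inequality, with $\sum_m\bigl(m-\frac{6q}{q+1}\bigr)t_m$ on the left; you copied the typo $t_m$.

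Your eigenspace variant can be made to work, and is even sharper for $q\ge 4$, but not as you state it. The nontrivial eigenspaces do not all have the same signature: $\sigma_{\zeta^s}=\sigma(Y)-\frac{2s(q-s)}{q^2}S^2$. The useful condition is $\sigma_{\zeta^s}\le \dim H_{\zeta^s}=N+1$, i.e.\ $b^-_{\zeta^s}\ge 0$, and that dimension count again needs a connected branch locus and $b_1=0$. Summing these conditions over $s\neq 0$ is what produces the constant $\frac{6q}{q+1}$; a single $s$ gives $\frac{q^2}{s(q-s)}$ instead.
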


\begin{theorem}\label{thm:odd-weak}
    Let $\mathcal{A}$ be an odd combinatorial conic-line arrangement with only with only ordinary multiple points.
    Call $t_m$ the number of multiple points of multiplicity $m$ for $m \ge 2$.
    If $\mathcal{A}$ is \emph{topologically} realisable, then either $\mathcal{A}$ is a pencil or
    \[
    4k + \ell -1 \equiv \sum_j (t_m-1) \pmod{16} \quad \text{ and }\quad
    \sum_j (t_m-9) \le 4k + \ell - 9.
    \]
\end{theorem}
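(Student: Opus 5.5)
The plan is to run the Aceto--Golla argument for odd line arrangements with lines replaced by lines and conics: build a spin double cover from the blown-up arrangement, then apply Rokhlin's theorem and Furuta's $10/8$-theorem to it. Fix a topological realisation of $\mathcal{A}$ in $\CP$ by locally-flat spheres: $\ell$ lines of degree $1$ and $k$ conics of degree $2$, so $D=2k+\ell$. Blow up each multiple point $p$, of multiplicity $m_p$, to get $Y=\CP\#n\overline{\CP}$ with $n=\sum_m t_m$. Since all singularities are ordinary, the proper transform $B$ is a disjoint union of $k+\ell$ locally-flat spheres. Its class is
\[
[B]=Dh-\sum_p m_p e_p .
\]
The key identity is combinatorial. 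Each pair of components meets only at multiple points, and the sum of the self-intersections of the components is $4k+\ell$, so
\[
\sum_p m_p(m_p-1)=D^2-(4k+\ell), \qquad\text{hence}\qquad B\cdot B=4k+\ell-\sum_p m_p .
\]

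Next I construct the cover. I use the hypothesis that $\mathcal{A}$ is \emph{odd}, meaning the parity conditions on $D$ and the $m_p$. Concretely, I need a branch divisor $\widetilde B$, equal to $B$ possibly together with a suitable set of exceptional spheres chosen so that it stays a disjoint union of smooth spheres, such that $[\widetilde B]=2L$ with $L$ satisfying $L\equiv w_2(Y)$ modulo $2$. With this choice, $w_2(X)=\pi^*(w_2(Y)+L)=0$ for the double branched cover $\pi\colon X\to Y$, so $X$ is spin. The standard formulas then give
\[
\sigma(X)=2\sigma(Y)-\tfrac12\widetilde B\cdot\widetilde B,\qquad \chi(X)=2\chi(Y)-\chi(\widetilde B).
\]
Here $\sigma(Y)=1-n$, $\chi(Y)=3+n$ and $\chi(\widetilde B)=2\cdot\#\{\text{components}\}$. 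Substituting the identity above, these become explicit linear expressions in $k$, $\ell$, the $t_m$ and $\sum_m m\,t_m$. I expect them to reorganise into the form $\sum_m(t_m-1)$ versus $4k+\ell-1$ appearing in the statement; I will check the exact normalisation against the corollary for the degree-two case. I also need $b_1(X)=0$. It follows from $H_1(Y\setminus\widetilde B)$ being generated by meridians, which the double cover abelianises to $\Z/2$; this is where the non-pencil hypothesis enters, because for a pencil the branch locus degenerates and the cover can acquire first homology or become a small manifold.

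With $X$ closed, smooth or topological, and spin, the two statements follow in turn. Rokhlin's theorem (its topological version, via Kirby--Siebenmann, if $X$ is only topological) gives $\sigma(X)\equiv0\pmod{16}$, which after substitution is the congruence $4k+\ell-1\equiv\sum(t_m-1)\pmod{16}$. Furuta's theorem gives $b_2(X)\ge\frac{10}{8}|\sigma(X)|+2$ when $X$ is not a homotopy $S^4$; since $b_1=0$, we have $b_2(X)=\chi(X)-2$. Inserting the formulas and taking the sign of $\sigma(X)$ that produces the constraint gives the linear inequality $\sum(t_m-9)\le4k+\ell-9$.

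There is one subtlety in the topological case. Furuta's theorem is smooth, so for a topological realisation I will have to argue that the cover is smoothable, or else use the topological analogue of the $10/8$ bound available in the source paper~\cite{AcetoGolla}; I would follow whichever route that paper takes for lines. The main obstacle I anticipate is the second step: pinning down exactly which exceptional curves to add to the branch locus so that the class is divisible by $2$, the divisor remains disjoint, and $L$ is characteristic. Once that choice is right, everything else is bookkeeping with the intersection identity above.
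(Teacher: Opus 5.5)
\paragraph{The branched double cover does not exist.}
The gap is exactly at the step you flagged. The spin double cover you want does not exist, and no choice of exceptional spheres will produce it.

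In the basis $h,e_p$ you have $[B]=Dh-\sum_p m_pe_p$ with $D$ and all $m_p$ odd. Exceptional spheres, and their proper transforms after further blow-ups, have $h$-coefficient $0$. So every $\widetilde B$ of the form you describe still has odd $h$-coefficient $D$. Hence $[\widetilde B]$ is never divisible by $2$, let alone of the form $2L$ with $L$ characteristic, which would even force every coefficient to be $\equiv 2\pmod 4$. Moreover, each $E_p$ meets $B$ in $m_p$ points, so it cannot be added to the branch locus without further blow-ups.

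The oddness hypothesis is the opposite of divisibility: it says precisely that $[B]$ is \emph{characteristic} in $H_2(Y)\cong\langle 1\rangle\oplus N\langle -1\rangle$, where $N$ is your $n$, the number of blow-ups. Branched double covers are the tool for the $2$-divisible case of Section~\ref{s:divisible}, not here. A further problem: even for a topological spin $X$, Rokhlin's theorem only gives $\sigma(X)\equiv 8\,\mathrm{ks}(X)\pmod{16}$, not $\sigma(X)\equiv 0$.

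\paragraph{What the paper does instead.}
The paper works with the characteristic class directly.
\begin{itemize}
\item Tube the $k+\ell$ disjoint components of $B$ into a single locally flat sphere $S$ with $[S]=[B]$ and $S\cdot S=4k+\ell-\sum_j m_j$. Your intersection identity is correct and is exactly what is needed here.
\item \emph{Congruence.} By the Kervaire--Milnor theorem for characteristic spheres, in its locally flat version due to Lee--Wilczynski, $S\cdot S\equiv\sigma(Y)=1-N\pmod{16}$. This is the congruence $4k+\ell-1\equiv\sum_j(m_j-1)=\sum_m(m-1)t_m$. The expressions $\sum_j(t_m-1)$ and $\sum_j(t_m-9)$ in the statement are typos for $\sum_m(m-1)t_m$ and $\sum_m(m-9)t_m$, so do not try to match them literally.
\item \emph{Where the non-pencil hypothesis enters.} It is used to get $s:=-S\cdot S>0$ (pencils have $S\cdot S=0$), not to control $b_1$.
\item \emph{Inequality.} Remove $\nu S$, whose boundary is $L(s,1)$, and glue in the linear plumbing of $s-1$ spheres of square $+2$. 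Since $S$ is characteristic, the complement is spin and its spin structure extends over the plumbing. This gives a closed spin $W$ with $b_2(W)=N+s-1$ and $\sigma(W)=1-N+s$. Furuta's theorem $\frac54|\sigma(W)|\le b_2(W)$ then yields $\sum_j(m_j-9)\le 4k+\ell-9$.
\end{itemize}

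\paragraph{The topological case.}
Your worry about Furuta in the topological category is justified, and the paper does not resolve it either. Theorem~\ref{prop:odd} proves the $10/8$ inequality only for \emph{smooth} realisations. For a merely topological realisation, $W$ is only a topological spin manifold, and there the $10/8$ bound fails in general (e.g.\ Freedman's $E_8\oplus E_8$ manifold). So there is no ready-made topological analogue to defer to. For topological realisations, this argument establishes only the congruence.
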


Besides the specific statements, the main goal of the paper is to further strengthen the bridge between 4-dimensional topology and the study of complex curves in complex surfaces.
We collect a number of results that are probably known to experts, or that are well known in one of the two communities but not so much in the other.

For instance, we spend a fair amount of time on the topology of ADE singularities in Section~\ref{s:top-ADE}, and on resolutions of 2-suspension singularities in Section~\ref{s:suspension}.

We also take some time to define a conic-line generalisation of rank-3 matroids, which we call \emph{combinatorial conic-line arrangements}.
However, much like rank-3 matroids, these do not all arise as the underlying combinatorial structure of a complex conic-line arrangement. (In fact, line arrangements/rank-3 matroids already suffice to show this.)

\subsection{History, motivation, and perspectives}
The motivation for this paper comes mostly from two results due to Hirzebruch, which give restrictions on the singularities of a complex curve in two cases: curves with ADE singularities~\cite{Hirzebruch1} and line arrangements~\cite{Hirzebruch}.

In~\cite[Section~5]{Hirzebruch1}, Hirzebruch proves Theorem~\ref{thm:milnor-bound2} in the algebro-geometric category (see also~\cite{Persson}).
In a sense, our proof follows his rather closely, but we are careful to recast the proof in purely topological terms.

In~\cite{Hirzebruch}, Hirzebruch proves the following beautiful inequality for line arrangements in the complex projective planes:
\begin{equation}\label{eq:Hirzebruch-OG}
 t_2 + t_3 \ge \ell + \sum_{m \ge 5}(m-4)t_m,
\end{equation}
where $\ell$ is the number of lines (which is also the degree) and $t_m$ is the number of points of multiplicity $m$ in the arrangement, provided that $t_\ell = t_{\ell-1} = 0$.
Hirzebruch's proof is intrinsically algebro-geometric: one passes to suitable branched abelian covers and applies the Bogomolov--Miyaoka--Yau inequality.
This circle of ideas has subsequently produced a large family of inequalities for arrangements of curves and, more generally, for singular plane curves.
For instance, for line arrangements with $t_m = 0$ for $m \ge \frac{2\ell}3$, Bojanowski~\cite{Bojanowski} proved a much stronger version of~\eqref{eq:Hirzebruch-OG}, which features a \emph{quadratic} term in $m$ on the right-hand side, using Langer's version of the orbifold Bogomolov--Miyaoka--Yau inequality \cite{Langer}.

Both theorems of Hirzebruch have been studied in more details for conic-line arrangements, and especially in the case of maximising, by the second author~\cite{Pokora-conicline-BMY} and Dimca~\cite{DimcaPokoraMaximizing}.
We will give more details on the algebro-geometric aspects of this class of configurations in Section~\ref{ss:AG}.

It is natural to ask to what extent such restrictions depend on the complex-algebraic nature of the arrangement. 
More precisely, suppose that the components of an arrangement are replaced by embedded surfaces in $\mathbb{CP}^{2}$, representing the same homology classes and having the same prescribed local singularity models. 
Which numerical inequalities continue to hold? 

As mentioned above, for line arrangements this question was studied by Ruberman and Starkston~\cite{RL}, and more recently by Aceto and the first author~\cite{AcetoGolla} and by Aceto, McCoy, and Park~\cite{AcetoMcCoyPark}.

\subsection*{Organisation of the paper}
In Section~\ref{s:conic-line-matroids} we
introduce combinatorial conic-line arrangements, their realisations, and their multiplicity multisequences.
In Section~\ref{s:top-ADE} we survey known results on the topology of ADE singularities and study  realisations of ADE combinatorial conic-line arrangements and, more generally, of topological ADE curve configurations, thus proving Theorem~\ref{thm:milnor-bound}.
In the same section we also discuss some algebro-geometric reflections of our results.
In Section~\ref{s:divisible} we introduce and study divisible line arrangements and, using branched covers, we prove a suitable generalisation of Theorem~\ref{thm:divisible-weak}. Section~\ref{s:odd} concerns odd
conic-line arrangements, which we define before proving Theorem~\ref{thm:odd-weak}, using spin structures.
In Section~\ref{s:bonus} we consider configurations that have both ADE singularities and ordinary multiple points, and we prove an inequality that in some sense interpolates between Theorems~\ref{thm:milnor-bound}, \ref{thm:divisible-weak} (when $q=2$), and~\ref{thm:odd-weak} (but which is sensibly weaker than the latter).
Finally, in Section~\ref{s:suspension} we give a topological perspective on the resolution of the double suspension of a curve singularity.

\section{Combinatorial conic-line arrangements and their realisations}\label{s:conic-line-matroids}

In this section we define combinatorial conic-line arrangements, which are to conic-line arrangements what rank-3 matroids are to line arrangements.
Note that there is a much more general concept in the literature, that of the combinatorial type of a curve arrangement~\cite[Remark 3]{Artaletal-Survey}, but this dramatically simplifies when talking about conic-line arrangements.

\begin{definition}
    A \emph{combinatorial conic-line arrangement} $\mathcal{A} = (\mathcal{K}, \mathcal{L}, \mathcal{P}, I)$ is the datum of three sets, $\mathcal{K}$, $\mathcal{L}$, and $\mathcal{P}$, together with a function $I \colon \mathcal{C}\times \mathcal{C} \times \mathcal{P} \to \mathbb{Z}_\ge 0$, where $\mathcal{C} := \mathcal{K} \sqcup \mathcal{L}$, such that:
    \begin{itemize}
        \item $I(C,C,p) = 0$ for all $c\in\mathcal{C}$, $p \in \mathcal{P}$;
        \item $I(C,C',p) = I(C',C,p)$ for all $C,C' \in \mathcal{C}$ and $p \in \mathcal{P}$;
        \item if $I(C,C',p), I(C,C'',p) \ge t$, then $I(C',C'',p) \ge t$;
        \item $\sum_p I(C,C',p) = 4$ for each $C \neq C' \in \mathcal{K}$;
        \item $\sum_p I(C,C',p) = 2$ for each $C \in \mathcal{K}$, $C' \in \mathcal{L}$;
        \item $\sum_p I(C,C',p) = 1$ for each $C\neq C' \in \mathcal{L}$.
    \end{itemize}
    Furthermore, we impose that for each $p\in\mathcal{P}$ the function $I(\cdot,\cdot,p)$ is not identically $0$.

    We call $\mathcal{C}$, $\mathcal{K}$, and $\mathcal{L}$ the (\emph{irreducible}) \emph{components}, the \emph{conics}, and the \emph{lines} of the arrangement, respectively. We let $k = \#\mathcal{K}$, $\ell = \#\mathcal{L}$, $r = k + \ell$, and we call $2k+\ell$ the \emph{degree} of the arrangement.
\end{definition}

\begin{definition}
    A combinatorial conic-line arrangement is:
    \begin{itemize}
        \item a \emph{pencil of lines} if $k = 0$ and $\#\mathcal{P} =1$,
        \item a \emph{pencil of conics} if $\ell = 0$ and $I(C,C',p) = I(D,D',p)$ for every $C\neq C', D \neq D' \in \mathcal{K}$ and every $p \in \mathcal{P}$.
    \end{itemize}
\end{definition}

If $C_1 \cup \dots \cup C_r$ is a conic-line arrangement in $\mathbb{CP}^2$ comprising $k$ conics $C_1, \dots, C_k$ and $\ell$ lines $C_{k+1}, \dots, C_r$, then one easily checks that the incidence data of its components gives a combinatorial conic-line arrangement.

Given $p \in \mathcal{P}$ and $C \in \mathcal{C}$, we say that $p$ \emph{belongs to} $C$, or that $C$ \emph{passes through} $p$, if $I(C,\cdot,p) \not\equiv 0$.
Two curves $C, C'$ passing through $p$ are said to be \emph{transverse} if $I(C,C',p) = 1$, \emph{simply tangent} if $I(C,C',p) = 2$, and \emph{tangent to order} $t$ if $I(C,C',p) = t$.
If $I(C,C',p) > 0$, we also say that $C$ and $C'$ \emph{meet at} $p$.
If $p \in \mathcal{P}$ has $\sum_{C,C'} I(C,C',p) = 2$, i.e. if only two curves meet at $p$ and they do so transversely, we say that $p$ is a \emph{node}.

\begin{definition}
    A combinatorial conic-line arrangement $\mathcal{A}$ is \emph{realisable over} $\mathbb{K}$ if there exists a conic-line arrangement whose incidence data are encoded by $\mathcal{A}$.
    If $\mathbb{K} = \mathbb{C}$, we simply say that it is \emph{realisable}.
\end{definition}

We will be interested in \emph{symplectic}, \emph{smooth} and \emph{topological} realisations of $\mathcal{A}$ in $\CP$.

First, recall that $\CP$ supports a unique symplectic structure $\omega_{\rm FS} \in \Omega^2(\CP)$, called the \emph{Fubini--Study} structure, and that $\omega_{\rm FS}$ is compatible with the complex structure on $\CP$. A smoothly embedded surface in $\CP$ is said to be \emph{symplectic} if its tangent planes are $\omega_{\rm FS}$-positive. Equivalently, there exists an almost-complex structure $J$, compatible with $\omega_{\rm FS}$, such that the surface is the image of a $J$-holomorphic map.

\begin{definition}
    A combinatorial line arrangement $\mathcal{A}$ is \emph{symplectically realisable} if there exists an almost-complex structure $J$ and a collection of $J$-holomorphic curves realising the combinatorics of $\mathcal{A}$.
\end{definition}

Let us now move to the smooth and topological categories.
Recall that the incidence data of a collection of non-singular complex curves at a point determines the topological type of a singularity, which in turn can be encoded in its \emph{link}, which is a link in the 3-sphere.
At each point $p$, a complex curve (regardless of whether its components are non-singular) is locally homeomorphic to the cone over its link at $p$.
The links arising as links of singularities (regarless of whether the components are singular or not) in these fashion are called \emph{algebraic links}.
Incidentally, an algebraic link comes from non-singular local branches   if and only if all of its components are unknots.

The multiplicity of intersections of two components at a point can be read off from the linking number of the corresponding components of the link of its singularity.

\begin{definition}
    A combinatorial line arrangement $\mathcal{A}$ is \emph{smoothly realisable} (respectively, \emph{topologically realisable}) if there exists a collection of smoothly embedded (resp., locally-flatly embedded), oriented 2-spheres in $\CP$ such that:
    \begin{itemize}
        \item at each intersection point of at least two spheres, the union of the spheres is \emph{homeomorphic} to the cone over an algebraic link;
        \item the collection of linking numbers of these algebraic links realises the combinatorial type of $\mathcal{A}$.
    \end{itemize}
\end{definition}

To be slightly more precise: every component of the link of a singularity is oriented and comes from a single sphere. The pairwise linking numbers at each point give a function from pairs of components to the integers at each point, which provide the incidence functions.

Note that a smooth realisation is in particular a topological realisation.

The main question we want to address in the paper is the following. Given a combinatorial line arrangement $\mathcal{A}$, does it admit a smooth or topological realisation?

In what follows, we write $h\in H_2(\mathbb{CP}^2;\Z)$ for the positive generator, i.e. the homology class of a complex line.

Let
\[
 C = C_1\cup\ldots\cup C_r\subset \mathbb{CP}^2
\]
be a topological realisation of a conic-line arrangement consisting of $k$ conics and $\ell$ lines, as above.
From the assumptions on the incidence function, possibly up to a global orientation-reversal of all of the components of $C$, each line of $C$ represents the homology class $h$ and each conic of $C$ represents $2h$.

\subsection{The multiplicity multisequence}\label{ss:multisequence}

Suppose that $C$ is an algebraic curve in $\CP$.
Then there exists a minimal sequence of blow-ups of $\CP$, yielding a complex surface $Y$ that is diffeomorphic to
\[
Y \cong \mathbb{CP}^{2}\#N\overline{\mathbb{CP}}^{\,2}
\]
such that the \emph{strict transform} of $C$ is non-singular.
(The strict transform is obtained from the preimage of $C$ by the blow-ups by first removing the exceptional divisor and then completing. It is biholomorphic to the normalisation of $C$.)

This sequence of blow-ups comes with a collection of \emph{multiplicities}, which keep track of the multiplicities of the singularities of $C$ when we keep blowing up:
\[
m_1,\ldots,m_N.
\]
Since this collection of multiplicities depends only on the collection of the singularities, it makes sense to extend the definition to a general abstract curve arrangement (which we have not defined).

Let us give a definition that is suited for combinatorial conic-line arrangements.
Let $\mathcal{A} = (\mathcal{K}, \mathcal{L},\mathcal{P},I)$ be a combinatorial conic-line arrangement.
First, note that for each integer $t$ (which we think of as an order of tangency) we have an equivalence relation $\sim_t$ on the set of pairs $(C,p)$, where $C \in \mathcal{C} = \mathcal{K} \cup \mathcal{L}$, given by:
\[
(C,p) \sim_t (C',p') \quad \Longleftrightarrow \quad p = p' \text{ and } I(C,C',p) \ge t.
\]
These equivalence relation partitions $\mathcal C \times \mathcal{P}$ into equivalence classes, and we denote $(m^t_j)$ unordered sequence of the sizes of these equivalence classes \emph{when they contain at least two elements}. (That is to say, we disregard singletons, and we just keep the cardinalities of the other classes.)
\begin{definition}
    The \emph{multiplicity multisequence} of the combinatorial conic-line arrangement $\mathcal{A}$ is un-ordered sequence $(m_j) = (m^t_j)_{t = 1,\dots,4}$.
\end{definition}

We invite the reader to check that if $C$ is a complex realisation of $\mathcal{A}$, then the two definitions of multiplicity multisequence do indeed agree.

\begin{lemma}\label{l:selfintersection}
In the notation above,
\begin{equation}\label{eq:noadjunction}
(2k+\ell)^2 - \sum_j m_j^2 = 4k + \ell - \sum_j m_j.    
\end{equation}
\end{lemma}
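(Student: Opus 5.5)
The plan is to prove \eqref{eq:noadjunction} by a purely combinatorial double count from the axioms of a combinatorial conic-line arrangement, with no topology involved. Rearranging, \eqref{eq:noadjunction} is equivalent to
\[
\sum_j \binom{m_j}{2} \;=\; \frac{(2k+\ell)^2 - (4k+\ell)}{2}.
\]
I will show that both sides are equal to
\[
N := \sum_{\{C,C'\}} \sum_{p\in\mathcal P} I(C,C',p),
\]
where the outer sum runs over unordered pairs of distinct components.

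First I evaluate $N$ using the three summation axioms. Pairs of conics contribute $4\binom{k}{2}$, conic-line pairs contribute $2k\ell$, and pairs of lines contribute $\binom{\ell}{2}$. Hence
\[
2N = 4k^2-4k+4k\ell+\ell^2-\ell = (2k+\ell)^2-(4k+\ell),
\]
which is the right-hand side above.

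Next I double count the quantity
\[
\sum_t\sum_j \binom{m^t_j}{2}.
\]
Fix $t\in\{1,\dots,4\}$. The relation $\sim_t$ is an equivalence relation on pairs $(C,p)$ (transitivity is exactly the third axiom). A class of size $m$ contains $\binom m2$ unordered pairs $\{(C,p),(C',p)\}$ with $C\neq C'$ and $I(C,C',p)\ge t$. Singleton classes contribute $\binom12=0$, so discarding them in the definition of the multisequence changes nothing. Therefore
\[
\sum_j \binom{m^t_j}{2} = \#\bigl\{(\{C,C'\},p) : I(C,C',p)\ge t\bigr\}.
\]
Summing over $t=1,\dots,4$, each triple $(\{C,C'\},p)$ is counted $\min(I(C,C',p),4)$ times.

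The one point needing care is that no intersection multiplicity exceeds $4$, so the truncation at $t=4$ loses nothing. This holds because $I(C,C',p)\le\sum_q I(C,C',q)\le 4$ for every pair. Hence $\min(I(C,C',p),4)=I(C,C',p)$, and the total count is exactly $N$.

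Combining the two computations gives $\sum_j\binom{m_j}2 = N = \tfrac12\bigl((2k+\ell)^2-(4k+\ell)\bigr)$, which is \eqref{eq:noadjunction}. I expect no real obstacle beyond the bookkeeping just described: matching the range $t=1,\dots,4$ with the maximal tangency order, and noting that dropping singletons is harmless. Finally, I would remark that for a complex realisation the identity is just the statement that the strict transform has self-intersection $(2k+\ell)^2-\sum m_j^2$ and that each component has genus zero, so adjunction gives the same formula. This explains the label of the equation, but the combinatorial proof does not need it.
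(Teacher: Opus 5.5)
Your argument is correct, but it is not the route the paper takes. The paper argues by induction on $M(\mathcal{A})=2k^2-2k+2k\ell+\frac12(\ell^2-\ell)-a_1$, which measures how far $\mathcal{A}$ is from being nodal.

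\begin{itemize}
\item When $M(\mathcal{A})=0$, every point is a node. The multisequence then consists of $a_1=N$ entries equal to $2$, and the identity is immediate.
\item Otherwise, the paper combinatorially perturbs a component $C$ at a non-nodal point $p$, replacing its incidences at $p$ by $\sum_i I(C,C_i,p)$ new transverse nodes. It leaves to the reader the checks that the new datum is still an arrangement and that \eqref{eq:noadjunction} is preserved.
\end{itemize}

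That check is really a local instance of your count. For each $t$, the $\sim_t$-class of $(C,p)$ drops from size $s_t$ to $s_t-1$. This lowers $\sum_j m_j(m_j-1)$ by $2(s_t-1)=2\#\{C_i : I(C,C_i,p)\ge t\}$. Summing over $t$, the loss is exactly offset by the $2\sum_i I(C,C_i,p)$ contributed by the new nodes.

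Your global double count $\sum_j\binom{m_j}{2}=N$ does all of this at once. It therefore avoids verifying that the perturbed datum satisfies the axioms; for instance, if $p$ is a tacnode then $I'(\cdot,\cdot,p)\equiv 0$ afterwards and $p$ has to be discarded. It also avoids checking that $M$ strictly decreases. Finally, it makes transparent that only the summation axioms, transitivity, and the bound $I\le 4$ enter. The paper's induction, in exchange, mirrors the geometric picture of perturbing a component near a singular point, which is the viewpoint used in the rest of the paper.

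One small remark: since $I(C,C,p)=0$, the relation $\sim_t$ as literally written is not reflexive. It should be read as its reflexive closure ($C=C'$ or $I(C,C',p)\ge t$). You use this implicitly, and it is harmless.
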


The lemma can be proven in the algebro-geometric category by using the adjunction formula.
We sketch a combinatorial proof, leaving some elementary checks for the reader.

\begin{proof}[Sketch of proof]
    We will work by induction on the quantity:
    \[
    M(\mathcal{A}) = 2k^2-2k+2k\ell+\frac12(\ell^2-\ell) - a_1,
    \]
    where $a_1$ is the number of nodes of $\mathcal{A}$. (The reason why this is quantity is called $a_1$ will be clear in the next section.)

    First, note that, for any conic-line arrangement,
    \[
    \sum_{p, C, C'} I(C,C',p) = 4\binom{k}{2} + 2k\ell + \binom{\ell}{2}.
    \]
    In particular, a conic line arrangement has at most $2k^2-2k+2k\ell+\frac12(\ell^2-\ell)$ nodes, and if it does \emph{all} singular points are nodes.
    That is to say, $M(\mathcal{A}) = 0$ if and only if each point $p \in \mathcal{P}$ is a node.
    In this case, equation~\eqref{eq:noadjunction} is immediate to verify, as $m_j = 2$ for each $j$, and there are $a_1$ entries in the multiplicity multi-sequence.

    The inductive step works as follows.
    Given $\mathcal{A}$ for which $M(\mathcal{A}) > 0$, choose $p \in \mathcal{P}$ that is not a node and $C \in \mathcal{C}$ passing through $p$.
    Define $\mathcal{A}'$ as follows:
    \begin{align*}
        \mathcal{K}' &= \mathcal{K}, \quad \mathcal{L}' = \mathcal{L},\\
        \mathcal{P}' &= \mathcal{P} \sqcup \{p_{i,t} \mid i = 1,\dots,r, 1 \le t \le I(C,C_i)\},
    \end{align*}
    and let $I'(\cdot,\cdot,q) = I(\cdot, \cdot, q)$ for $q \in \mathcal{P}\setminus \{p\}$, let $p_{i,t}$ be a node where $C_i$ and $C$ meet transversely for $t = 1, \dots, I(C,C_i,p)$, and
    \[
        I'(C_i,C_j,p) = \left\{\begin{array}{ll}
            I(C_i,C_j,p) & \text{if } C_i, C_j \neq C\\
            0 & \text{if } C_i = C \text{ or } C_j = C.
        \end{array}
        \right.
    \]
    (This is the combinatorial version of locally perturbing $C$ to intersect transversely all other components in a neighbourhood of $p$.)
    It is easy to verify that $\mathcal{A}'$ is still a conic-line arrangement, and that $M(\mathcal{A}') < M(\mathcal{A})$, thus completing the inductive step.
\end{proof}

\section{Topological ADE realisations}\label{s:top-ADE}

In this section, we restrict to configurations with \emph{simple} or \emph{ADE} singularities.
We will look at a more general situation, and we will prove Theorem~\ref{thm:milnor-bound2} below, which directly implies Theorem~\ref{thm:milnor-bound}.

\subsection{ADE singularities}\label{ss:ADE}
In the case of conic-line configurations, only the following ADE singularities can arise:
\begin{itemize}
    \item $A_{2t-1}$, $1 \le t \le 4$: when exactly two curves meet at a point, and their order of tangency is $t$;
    \item $D_{2t+2}$, $1 \le t \le 4$: when exactly three curves meet at a point, two of them having order of tangency $t$ and the other one being transverse to both.
\end{itemize}
For instance, $A_1$ is an ordinary double point (a node), $A_3$ is a simple tangency (a tacnode), and $D_4$ is an ordinary triple point.

Note that a simple singularity in a conic-line arrangement can be read off from the incidence matrix, so it is a combinatorial property.

\begin{definition}
    A combinatorial conic-line arrangement is of \emph{type ADE} if all of its singularities are of type ADE.
\end{definition}



Recall the statement that we want to prove:
if a combinatorial ADE conic-line arrangement $\mathcal A$ has even total degree $D=2m$ and
admits a topological realisation, then
\[
 \sum_{p\in\Sing(\mathcal A)}\mu_p\leq 3m^2-3m+1.
\]
Here, $\mu_p$ is the \emph{Milnor number} of the singularity.
This has both an algebro-geometric and a topological interpretation.

Algebro-geometrically, if $C = V(f) \subset \C^2$ has a singularity of type $A_s$, $D_s$, or $E_s$ at the point $0 \in \C^2$, then
\[
\mu_{0} = \dim \C[[x,y]]/(\partial_x f, \partial_y f).
\]
Topologically, if $C' = V(f+\varepsilon)$ for some $|\epsilon| \ll 1$, then $M = C' \cap B^4_\eta$ is a smoothly embedded surface with boundary, called the \emph{Milnor fibre} of the singularity of $C$ at $0$, and
\[
\mu_{0} = b_1(M) = \dim H_1(M;\C) = \rk H_1(M;\Z).
\]
Note that $M$ bounds a link which is isotopic to $C' \cap \del B^4_\eta$, and $\mu$ turns out to be the minimal $b_1$ of any \emph{smoothly} embedded surface in the 4-ball (or even in the 3-sphere) that bounds the same link~\cite{KronheimerMrowka}.
This is also true in the topological category for ADE singularities, but \emph{not} for more general algebraic knots~\cite{BaaderFellerLewarkLiechti}.

In each category among topological, smooth, and symplectic, one can individually replace a singularity of a curve arrangement by its Milnor fibre, and obtain a curve arrangement in the same category, just one with one less singularity. (This is \emph{not} always possible in the algebro-geometric category!)
We will refer to such an operation as a \emph{smoothing}.

For ADE singularities, something more is true: $-\mu$ is the Murasugi--Trotter signature of the link of the singularity. To us, this implies the following: if $F$ is any properly, locally-flatly embedded surface in $B^4_\eta$ whose boundary is the same as that of $M$, then the intersection form of the double-cover $\Sigma(B^4_\eta,F)$ of $B^4_\eta$ ramified over $F$ has signature $-\mu$.
We will not need it here, but $\Sigma(B^4_\eta,F)$ is in fact diffeomorphic to the total space of the resolution of the surface ADE singularity that suspends the singularity of $C$, and its intersection form is isomorphic to the corresponding ADE lattice.
See, for example, \cite{Brieskorn,Durfee}, or Section~\ref{s:suspension} below.

Since $b_2(\Sigma(B^4_\eta,M)) = b_1(M) = \mu$, this in turn implies that $\Sigma(B^4_\eta,M)$ is negative definite.

Incidentally, $\Sigma(B^4_\eta,M)$ is the Milnor fibre of the \emph{(double) suspension} of the singularity of $C$ at $0$, where the $n$-suspension is defined by the equation $z^n+f(x,y)$ and gives an isolated hypersurface singularity in $\C^3$.
For an ADE singularity, the Milnor fibre is diffeomorphic to the corresponding ADE plumbing of cotangent bundles of 2-spheres (with the opposite orientation---this is the orientation convention in for cotangent bundles in symplectic topology).

Given a plane curve $C$, we will denote with $\mu(C)$ the \emph{total Milnor number} of $C$, which is simply the sum of the Milnor numbers of its singularities.

\subsection{Topological ADE arrangements of curves}

We will prove something stronger than Theorem~\ref{thm:milnor-bound}, and we will fully recover Hirzebruch's inequality.
We need to set up one more definition, first.

\begin{definition}
    A \emph{smooth} (respectively, \emph{topological}) \emph{arrangement of curves} is the image $C = \im(f)$ of a map $f \colon \sqcup_{i=1}^r F_i \to \CP$ such that:
    \begin{itemize}
        \item $F_i$ is a homologically essential, closed, oriented surface for each $i$;
        \item $f$ is injective and smooth (resp., locally flat) away from finitely many points $q_1, \dots, q_n \in \sqcup_i F_i$;
        \item at each point $p = f(q_i)$ has a neighbourhood $p \in U \subset \CP$ such that either $C\cap U$ is locally-flatly embedded or $C \cap U$ is homeomorphic to the neighbourhood of a complex curve singularity as \emph{oriented} surfaces in $B^4$; in this case, $p$ has a well-defined Milnor number, $\mu_p$;
        \item $C$ satisfies:
        \[
            \sum_i \chi(F_i) - n + \sum_p (1-\mu_p) \ge 3D-D^2,
        \]
        where $D$ is the degree of $C$, defined below.
    \end{itemize}
\end{definition}
    Note that, up to a global orientation sign, $f_*(F_i) = D_i h$ for some $D_i > 0$. We call $D_i$ the \emph{degree} of $f(F_i)$ and $D = \sum_i D_i$ the \emph{degree} of the arrangement. We will call either of $f$ and $\sqcup F_i$ the \emph{normalisation} of $C$.
    
\begin{definition}
We say that $C$ is:
\begin{itemize}
\item  of \emph{ADE type} if all singularities of $C$ are of type ADE.
\item  of \emph{ADE-transverse type} if all singularities of $C$ are either of type ADE or they are ordinary multiple points.
\end{itemize}
\end{definition}

    We can extend the definition of total Milnor number from complex curves to topological and smooth arrangements of curves, by summing the Milnor numbers of all the local singularity types:
    \[
    \mu(C) := \sum_p \mu_p.
    \]
    Likewise, we extend the definition of \emph{multiplicity multisequence} by concatenating the multiplicity sequences of all the singular points of $C$.

The main theorem of this section is then the following.

\begin{theorem}\label{thm:milnor-bound2}
Let $C \subset \CP$ be a topological ADE arrangement of curves of even total degree $D=2m$.
Then
\begin{equation}\label{eq:milnor-bound2}
 \mu(C) \leq 3m^2-3m+1.
\end{equation}
\end{theorem}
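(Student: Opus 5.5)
The plan is to pass to the double branched cover and to compare the signature and Euler characteristic of the cover with the negative-definite pieces contributed by the singularities. The degree $D=2m$ is even, so $[C]=2mh$ is divisible by $2$, and a double cover of $\CP$ branched over $C$ exists.

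\emph{Step 1: smooth $C$ and form the cover.} Replace each singular point $p$ by its Milnor fibre $M_p\subset B^4_p$, as described in Section~\ref{ss:ADE}. This gives a locally-flat closed connected surface $F\subset\CP$ with $[F]=2mh$. By the last condition in the definition of a topological arrangement of curves, its Euler characteristic is $\chi(F)=\sum_i\chi(F_i)-n+\sum_p(1-\mu_p)\ge 3D-D^2=6m-4m^2$. Let $\Sigma=\Sigma(\CP,F)$ be the double cover branched over $F$. Then
\[
\chi(\Sigma)=2\chi(\CP)-\chi(F)\le 4m^2-6m+6,
\]
and the $G$-signature formula for double branched covers gives
\[
\sigma(\Sigma)=2\sigma(\CP)-\tfrac12[F]^2=2-2m^2 .
\]

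\emph{Step 2: lower bound on $b_2^-$.} The cover $\Sigma$ contains the disjoint submanifolds $\Sigma(B^4_p,M_p)$. By Section~\ref{ss:ADE}, each of these has $b_2=\mu_p$ and a negative definite intersection form; this is where the ADE hypothesis is used. These classes span pairwise orthogonal negative definite subspaces of $H_2(\Sigma;\R)$, so $b_2^-(\Sigma)\ge\mu(C)$.

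\emph{Step 3: upper bound on $b_2^-$.} We have $b_2^-=(b_2-\sigma)/2$ and $b_2=\chi-2+2b_1$. Combining with Step~1,
\[
\mu(C)\le b_2^-(\Sigma)\le \tfrac12\bigl(4m^2-6m+4+2b_1(\Sigma)+2m^2-2\bigr)=3m^2-3m+1+b_1(\Sigma).
\]
So the theorem follows once $b_1(\Sigma)=0$.

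\emph{Main obstacle: showing $b_1(\Sigma)=0$.} By the transfer, the invariant part of $H_1(\Sigma;\mathbb{Q})$ is $H_1(\CP;\mathbb{Q})=0$, so only the $(-1)$-eigenspace needs to be ruled out. The first approach I would try is to express this eigenspace as twisted homology $H_1(\CP\setminus\nu F;\mathbb{Q}_-)$, where the meridian acts by $-1$. This can be computed from a Mayer--Vietoris decomposition of $\CP$ as $\nu F\cup(\CP\setminus\nu F)$, using $H_1(\CP\setminus\nu F)\cong\Z/2m$, which is generated by the meridian. I would then compare Euler characteristics of the twisted and untwisted chain complexes to force the vanishing.

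If this does not go through directly, the fallback is to avoid $b_1$ entirely. Apply the same argument on $W=\Sigma\setminus\bigsqcup_p\Sigma(B^4_p,M_p)$ together with the definite pieces. One would then bound $b_2^+(\Sigma)$ from below by $b_2^+$ of the cover of a neighbourhood of a line class, and use Poincar\'e duality to handle the odd-degree Betti numbers symmetrically.
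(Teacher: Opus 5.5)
Your Steps 1--3 follow the paper's proof: the double cover of $\CP$ branched over the global smoothing $F$, $\chi\le 4m^2-6m+6$, $\sigma=2-2m^2$, and the disjoint negative-definite pieces over the Milnor balls. Your remark that orthogonality plus definiteness already gives injectivity into $H_2(\Sigma;\R)$ is correct. You should add that the $G$-signature formula is being applied to a merely locally flat $F$, which needs the locally flat extension the paper cites.

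\textbf{The gap.} It is exactly the step you flag: you never prove $b_1(\Sigma)=0$, so as written you only obtain $\mu(C)\le 3m^2-3m+1+b_1(\Sigma)$. Neither of your routes closes it.
\begin{itemize}
\item Comparing twisted and untwisted Euler characteristics of $E=\CP\setminus\nu F$ only yields $\beta_2-\beta_1-\beta_3=\chi(E)=3-\chi(F)$, where $\beta_i=\dim H_i(E;\mathbb{Q}_-)$. Here $\beta_2$ and $\beta_3$ are unknown, so $\beta_1$ is not isolated.
\item Rationally, your input $H_1(E;\Z)\cong\Z/2m$ carries no information, since $H_1(E;\mathbb{Q})=0$ says nothing about the $(-1)$-eigenspace.
\item The fallback cannot avoid $b_1$ either. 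Any upper bound on $b_2^-$ extracted from $\chi$ and $\sigma$ passes through $b_2=\chi-2+2b_1$. A lower bound on $b_2^+$ only gives $b_2^-=b_2-b_2^+$, which still contains $b_1$.
\end{itemize}

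\textbf{The missing idea.} Work with $\Z/2$ coefficients, where $H_1(E;\Z/2)\cong\Z/2$ is a genuine constraint. This is the Goldschmidt/Hsiang--Szczarba lemma the paper invokes. Concretely:
\begin{itemize}
\item Over $\Z/2$, the group ring of $\Z/2$ is an extension of the trivial module by itself. So the unbranched double cover $\tilde E\to E$ gives a short exact sequence $0\to C_*(E;\Z/2)\xrightarrow{\mathrm{tr}}C_*(\tilde E;\Z/2)\to C_*(E;\Z/2)\to 0$.
\item Since $\tilde E$ is connected, $H_0(\tilde E;\Z/2)\to H_0(E;\Z/2)$ is an isomorphism. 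Hence the connecting map $H_1(E;\Z/2)\to H_0(E;\Z/2)$ is onto, and therefore an isomorphism, as both groups are $\Z/2$.
\item It follows that the transfer $H_1(E;\Z/2)\to H_1(\tilde E;\Z/2)$ is onto. So $H_1(\tilde E;\Z/2)$ is generated by the transfer of the meridian, i.e.\ by the lifted meridian, which bounds a normal disc to the branch locus in $\Sigma$.
\item Since $H_1(\tilde E)\to H_1(\Sigma)$ is surjective, $H_1(\Sigma;\Z/2)=0$, and hence $b_1(\Sigma)=0$.
\end{itemize}
With this step inserted, your argument is complete and agrees with the paper's.
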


As mentioned above, there is a more general notion of combinatorial curve arrangement, as in~\cite{Artaletal-Survey}, as well as of their complex, symplectic, smooth, and topological realisations.
A topological ADE arrangement in the sense of the definition above would then be a topological realisation of a combinatorial arrangement whose singularities are all of type ADE.

A topological realisation of an ADE conic-line arrangement is in particular a topological ADE arrangement.
The only point which requires some justification is the condition on the Euler characteristic, but we leave it to the reader to verify that this is satisfied.



For later use, we recall some local data of ADE singularities in Table~\ref{table:ADE}.
\begin{center}
\footnotesize
\begin{table}[h!]
\begin{tabular}{c@{\qquad}c@{\qquad}c}
singularity & Milnor number & number of branches\\
\midrule
$A_{2s+1}$ & $2s+1$ & 2\\
$A_{2s}$ & $2s$ & 1\\
$D_{2s+2}$ & $2s+2$ & 3\\
$D_{2s+1}$ & $2s+1$ & 2\\
$E_6$ & 6 & 1\\
$E_7$ & 7 & 2\\
$E_8$ & 8 & 1\\
\phantom{a}
\end{tabular}
\caption{Local data of ADE singularities.}\label{table:ADE}
\end{table}
\end{center}


To prove Theorem~\ref{thm:milnor-bound2}, we begin with a standard observation.
In the case of symplectic curve arrangements, this is well-known (and equality holds instead of an inequality).

\begin{lemma}\label{lem:smoothing-genus}
Let $C \subset \CP$ be a topological ADE arrangement of degree $D$. Let $F$ be obtained by simultaneously smoothing of all singular points of $C$.
Then $F$ is connected,
\[
 [F]=Dh,
 \qquad
 g(F) \le \frac{(D-1)(D-2)}{2}.
\]
Moreover, if $C$ was a smooth arrangement, then equality holds.
\end{lemma}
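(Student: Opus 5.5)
The plan is to compute everything from the Euler characteristic condition in the definition of a topological arrangement of curves, and to get the genus bound from the fact that the smoothed surface represents $Dh$. Concretely, $F$ is obtained by removing a small ball $B_p$ around each singular point $p$ and gluing in the local Milnor fibre $M_p$. The Milnor fibre of an ADE singularity is connected with $b_1(M_p)=\mu_p$ and $r_p$ boundary components, so $\chi(M_p)=1-\mu_p$.

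\emph{Homology class and Euler characteristic.} Since $M_p$ and $C\cap B_p$ have the same boundary link in $\partial B_p$, and $H_2(B_p)=0$, replacing one by the other does not change the class: $[F]=\sum_i D_i h = Dh$. For the Euler characteristic, removing $C\cap B_p$ (a cone on $r_p$ circles) amounts, on the normalisation, to deleting $r_p$ discs, one around each preimage point $q_j$ of $p$. Hence
\[
\chi(F)=\sum_i\chi(F_i)-n+\sum_p(1-\mu_p),
\]
where $n=\sum_p r_p$ counts the preimages of singular points. The defining condition of an arrangement then gives $\chi(F)\ge 3D-D^2$.

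\emph{Connectedness.} Suppose $F=F'\sqcup F''$ with both pieces nonempty. Each piece is a union of smoothed pieces of components, and each component is homologically essential, so $[F']=ah$ and $[F'']=bh$ with $a,b>0$. Then $[F']\cdot[F'']=ab>0$, while disjoint surfaces have intersection number $0$, a contradiction. (Locally flat surfaces have well-defined algebraic intersection, so this works topologically.) Thus $F$ is connected, and $\chi(F)=2-2g(F)\ge 3D-D^2$, which yields $g(F)\le (D-1)(D-2)/2$.

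\emph{Equality in the smooth case.} For a smooth arrangement, $F$ is a smoothly embedded surface in the class $Dh$. The Thom conjecture (Kronheimer--Mrowka) gives $g(F)\ge (D-1)(D-2)/2$, hence equality. The only subtle point is that $F$ is smooth near the gluing regions, where the Milnor fibres are smooth and can be glued smoothly along the link. In the topological category the Thom conjecture fails, which is why only the inequality is asserted there. I expect this step, together with checking the Euler characteristic bookkeeping in the definition, to be the main point; the rest is routine.
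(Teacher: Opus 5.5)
Your proposal is correct and follows the paper's proof essentially step by step: the same Euler characteristic count using $\chi(M_p)=1-\mu_p$ and $n=\sum_p r_p$, the same local-surgery argument for $[F]=Dh$, the genus bound from connectedness, and Kronheimer--Mrowka for equality in the smooth case. The only difference is in the connectedness step. You derive a contradiction from $[F']\cdot[F'']=ab>0$ for a putative splitting $F=F'\sqcup F''$, whereas the paper observes that any two components of $C$ meet and that each connected Milnor fibre joins the branches through its point. Both arguments rest on the same positivity of intersections. Your version tacitly uses connectedness of $M_p$ to know that each piece of $F$ is the smoothing of a sub-arrangement, and hence has class a positive multiple of $h$.
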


\begin{proof}
Let $f \colon \sqcup_i F_i \to \CP$ be normalisation of $C$.

Choose pairwise disjoint Milnor balls around the singular points and replace the singular local branches by their Milnor fibres.
Since all components have non-zero homology class, every two components intersect.
Since each Milnor fibre is connected, replacing a singularity by its Milnor fibre joins all the components meeting at that point.
This proves that $F$ is connected.

Since $F$ is obtained from $C$ by doing embedded surgery in some 4-balls, the homology class $[F]$ is the same as the sum of the homology classes $f_*([F_i])$, so it is indeed $Dh$.

To compute the genus of $F$, we perform an Euler characteristic computation.

If the singularity of $C$ at $p$ has $\beta_p$ branches (see Table~\ref{table:ADE}), then removing a Milnor ball from $C$ removes three discs from $\sqcup_i F_i$.
Looking at all the singularities of $p$, these discs we are removing are centred at each of the points $q_1, \dots, q_n$, which are the non--locally-flat points of $C$.
Call $F_i'$ the surface $F_i$ with these discs removed.
The Euler characteristic of $\sqcup_i F'_i$ is therefore $\sum_i \chi(F_i) - n$.

By additivity of the Euler characteristic, gluing in the Milnor fibre $M_p$ at $p$ adds $\chi(M_p)$, which is $1-\mu_p$, so
\[
\chi(F) =  \sum_i \chi(F_i) - n + \sum_p (1-\mu_p) \ge 3D-D^2,
\]
by definition of topological ADE arrangement.

Since $F$ is connected, $g(F) \le \frac{(D-1)(D-2)}2$.

Finally, if $C$ were a smooth arrangement, then $g(F) \ge \frac{(D-1)(D-2)}2$~\cite{KronheimerMrowka}, which proves the equality.
\end{proof}

From now on, let us assume that $C$ is a topological ADE arrangement of even degree: $D=2m$ is even.

Let
\[
 \pi\colon X\longrightarrow\mathbb{CP}^2
\]
be the double cover of $\mathbb{CP}^2$ branched over the surface $F$ supplied by Lemma~\ref{lem:smoothing-genus}.

The following lemma is well-known in the algebraic setting, but our aim here is to deliver a concise complete topological proof.

\begin{lemma}[Hirzebruch]
\label{lem:betti-signature}
The branched cover $X$ satisfies
\[
 b_1(X)=0,
 \qquad
 b_2(X) \le D^2-3D+4,
 \qquad
 \sigma(X)=2-\frac{D^2}{2}.
\]
Consequently,
\begin{equation}\label{eq:bminus-double-plane}
 b_2^-(X) \le \frac34D^2-\frac32D+1=3m^2-3m+1.
\end{equation}
\end{lemma}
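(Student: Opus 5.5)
We compute the three invariants of $X$ from standard facts about double branched covers of $\CP$ along a connected, locally-flat surface $F$ with $[F]=Dh$, $D=2m$ even. Evenness of $D$ is exactly what makes $[F]$ divisible by $2$ in $H_2(\CP;\Z)$, so the double cover exists. Since $H_1(\CP\setminus F;\Z)\cong\Z/D$, there is a unique surjection to $\Z/2$, and the cover is unique. Lemma~\ref{lem:smoothing-genus} supplies $F$ with $g(F)\le (D-1)(D-2)/2$.

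\emph{Euler characteristic.} By multiplicativity away from the branch locus,
\[
\chi(X)=2\chi(\CP)-\chi(F)=6-(2-2g(F))=4+2g(F)\le 4+(D-1)(D-2)=D^2-3D+6.
\]

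\emph{First Betti number.} We show $b_1(X)=0$ by a transfer argument: $H_1(X;\Q)$ splits into $\pm1$ eigenspaces of the deck involution $\tau$. The $+1$ part is $H_1(\CP;\Q)=0$. For the $-1$ part, I would first argue that $\pi_1(\CP\setminus F)$ is generated by meridians of $F$; this uses connectedness of $F$ and a general-position argument, since a loop in $\CP$ bounds a disc transverse to $F$. All meridians of the connected surface $F$ are conjugate, and each meridian lifts to a loop in $X$ that bounds a disc, namely a fibre of the normal disc bundle, which is locally flat. Since a double of a meridian lifts to a nullhomotopic loop, $\pi_1(X)$ is generated by the images of such lifts, and these die in $X$. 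Hence $\pi_1(X)$ is trivial, or at least $H_1(X;\Q)=0$. This step is where the topological category (locally flat $F$, existence of normal bundles by Freedman--Quinn) needs care, and I expect it to be the main obstacle. A cleaner alternative is to note that the $-1$ eigenspace of $H_1(X;\Q)$ is $H_1$ of $\CP$ with coefficients in the twisted local system along $F$, and compute it via Mayer--Vietoris on $\CP=N(F)\cup E$.

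\emph{Second Betti number and signature.} Given $b_1=b_3=0$, we get $b_2(X)=\chi(X)-2\le D^2-3D+4$. For the signature, apply the $G$-signature theorem, or the classical formula for double branched covers (Viro / Rokhlin):
\[
\sigma(X)=2\sigma(\CP)-\tfrac12[F]^2=2-\tfrac{D^2}{2}.
\]
This holds for locally flat surfaces as well, since it can be derived from Novikov additivity by decomposing $X$ into the double cover of the normal disc bundle (a disc bundle over $F$ of Euler number $D^2/2$) and the double cover of the complement, and then using the eigenspace/twisted-signature computation.

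\emph{Conclusion.} Finally,
\[
b_2^-=\frac{b_2-\sigma}{2}\le \frac{D^2-3D+4-2+D^2/2}{2}=\frac34D^2-\frac32D+1,
\]
and substituting $D=2m$ gives $3m^2-3m+1$.
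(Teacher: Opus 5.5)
The Euler characteristic count, the locally flat $G$-signature formula and the final arithmetic all match the paper. The gap is the step $b_1(X)=0$, and your $\pi_1$ argument for it is wrong, not just delicate.

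A meridian $\mu$ of $F$ maps to the generator of $\Z/2$, so it does not lift to a closed loop in $X^\circ=X\setminus\tilde F$. Only $\mu^2$ and its conjugates lift to loops, and these are exactly the meridians of $\tilde F$ that die in $X$. But $\pi_1(X^\circ)$ is the index-two subgroup $K=\ker(\pi_1(\CP\setminus F)\to\Z/2)$. This subgroup also contains products of two different meridians, such as $\mu\cdot g\mu^{-1}g^{-1}$, and nothing kills their lifts. In fact $\pi_1(X)\cong K/\langle\!\langle\mu^2\rangle\!\rangle$, which is in general non-trivial.

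The same reasoning would show that the double branched cover of every knot in $S^3$ is simply connected, whereas two-bridge knots give lens spaces. The problem also occurs in the present setting. Knotting one component of $C$ by a spun two-bridge knot $K\subset S^4$ keeps all hypotheses and replaces $X$ by $X\#\Sigma_2(S^4,K)$, whose fundamental group contains $\Z/p$. So neither $\pi_1(X)=1$ nor $H_1(X;\mathbb{Q})=0$ follows from what you wrote.

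Your fallback only restates the problem. Identifying the $(-1)$-eigenspace with a twisted homology group of the exterior is correct, but rational Mayer--Vietoris gives no reason for that group to vanish. Compare $\Delta_K(-1)\neq 0$ for knots, which is really a mod-$2$ fact.

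The missing ingredient is a mod-$2$ argument that uses the evenness of $D$, i.e.\ $H_1(\CP\setminus F;\Z/2)\cong\Z/2$.
\begin{itemize}
\item The paper feeds this into the Goldschmidt lemma of Hsiang--Szczarba (see also Aceto--Golla, Lemma~3.3), which gives $b_1(X)=0$.
\item A self-contained alternative is the mod-$2$ Smith sequence of the deck involution. There $H^\rho_*(X)\cong H_*(\CP,F;\Z/2)$, and $H_1(\CP,F;\Z/2)=0$ because $F$ is connected. So the sequence identifies $H_1(X;\Z/2)$ with the cokernel of the connecting map $H_2(\CP,F;\Z/2)\to H_1(F;\Z/2)$. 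That map is onto because $H_1(\CP;\Z/2)=0$, and universal coefficients then give $b_1(X)=0$.
\end{itemize}
Once $b_1(X)=0$ is established this way, the rest of your proof goes through as written.
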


\begin{proof}
The Euler characteristic formula for a double branched cover gives
\[
 \chi(X)=2\chi(\mathbb{CP}^2)-\chi(F) = 6-(2-2g(F)).
\]
Using Lemma \ref{lem:smoothing-genus}, this becomes
\[
 \chi(X) \le D^2-3D+6.
\]

We next explain the vanishing of $b_1(X)$. Since $[F]=Dh$ is divisible by $2$,
Poincar\'e--Lefschetz duality and the long exact sequence of the pair
$(\mathbb{CP}^2,F)$ give
\[
 H_1(\mathbb{CP}^2\setminus F;\Z/2)\cong\Z/2.
\]
By the Goldschmidt lemma~\cite{HsiangSzczarba} (see also \cite[Lemma 3.3]{AcetoGolla}; alternatively, one can use~\cite[Section~1]{Gilmer}) $b_1(X)=0$.
Hence $b_2(X) = \chi(X)-2 \le D^2-3D+4$.

We now apply the G-signature theorem \cite{AtiyahSinger} which gives us
\[
 \sigma(X)=2\sigma(\mathbb{CP}^2)-\frac12[F]^2
 =2-\frac{D^2}{2}.
\]
Note that, while the original proof of the G-signature theorem only works for smoothly embedded surfaces, the result actually holds for locally-flatly embedded surfaces (see~\cite{RL, AcetoGolla} and references therein).

Formula \eqref{eq:bminus-double-plane} follows from
$2b_2^-(X)=b_2(X)-\sigma(X)$.
\end{proof}

The key observation now is that the local Milnor fibres inject into the global double cover.

\begin{lemma}\label{lem:local-lattices}
For each $p\in\Sing(C)$, let $W_p$ be the part of $X$ lying over a Milnor ball around $p$.
Then $H_2(W_p;\R)$ carries the negative-definite ADE root lattice of rank $\mu_p$, and the natural map
\[
 \bigoplus_{p\in\Sing(C)}H_2(W_p;\R)
 \longrightarrow H_2(X;\R)
\]
is injective. In particular,
\[
 \mu(\mathcal C) \leq b_2^-(X).
\]
\end{lemma}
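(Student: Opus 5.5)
We plan to prove Lemma~\ref{lem:local-lattices} in two stages: first identify the intersection lattice of each $W_p$, then show that these local lattices inject into $H_2(X;\R)$ using the global signature bound.

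\textbf{Local lattices.} Let $B_p$ be the Milnor ball around $p$. By construction of $F$, the intersection $F\cap B_p$ is the Milnor fibre $M_p$ of the singularity at $p$, so $W_p=\Sigma(B_p,M_p)$. As recalled in Section~\ref{ss:ADE}, $W_p$ is the Milnor fibre of the double suspension $z^2+f(x,y)$. Hence $H_2(W_p;\Z)$ is free of rank $b_1(M_p)=\mu_p$. The Murasugi--Trotter signature of an ADE link is $-\mu_p$, so $\sigma(W_p)=-\mu_p$ and the form is negative definite. It is in fact the negative ADE root lattice, but only negative definiteness is used below. Since $\partial W_p$ is a rational homology sphere, the intersection form on $H_2(W_p;\R)$ is nondegenerate.

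\textbf{Injectivity.} Let $W=\bigsqcup_p W_p\subset X$, and equip $\bigoplus_p H_2(W_p;\R)$ with the orthogonal sum of the local forms. This form is negative definite. The map $i_*\colon H_2(W;\R)\to H_2(X;\R)$ preserves intersection pairings, because classes supported in $W$ intersect in $X$ exactly as they do in $W$. Suppose $i_*(a)=0$. Then
\[
a\cdot a=i_*(a)\cdot i_*(a)=0,
\]
and negative definiteness forces $a=0$. So $i_*$ is injective.

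\textbf{Conclusion.} The image of $i_*$ is a $\mu(C)$-dimensional subspace of $H_2(X;\R)$ on which the intersection form of $X$ is negative definite. Any such subspace has dimension at most $b_2^-(X)$, so $\mu(C)\le b_2^-(X)$.

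The main step is the local signature statement $\sigma(W_p)=-\mu_p$ with $b_2(W_p)=\mu_p$. This is the only place where the ADE hypothesis enters, and we take it from the facts quoted in Section~\ref{ss:ADE} (Brieskorn, Durfee). If it needs checking, one can compute $\sigma(W_p)$ as the Levine--Tristram signature at $-1$ of the link, which is $-\mu_p$ for ADE links.

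Combining this lemma with Lemma~\ref{lem:betti-signature} gives $\mu(C)\le 3m^2-3m+1$, which is Theorem~\ref{thm:milnor-bound2}.
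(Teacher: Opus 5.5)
Your proof is correct. Your treatment of the local pieces (identifying $W_p$ with the Milnor fibre of $z^2+f_p$, with $b_2(W_p)=\mu_p=-\sigma(W_p)$, taken from the literature) is the same as the paper's; the difference is in how injectivity is obtained.

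The paper uses the Mayer--Vietoris sequence for $X=W\cup Z$, where $Z$ is the closure of $X\setminus W$. Since each $\partial W_p$ is a rational homology sphere, $H_2(\partial W;\R)$ and $H_1(\partial W;\R)$ vanish, so $H_2(W;\R)\oplus H_2(Z;\R)\to H_2(X;\R)$ is injective, and in fact an isomorphism. The paper then tacitly uses that inclusion preserves intersection numbers to place the image in the negative part of the intersection form of $X$.

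You make that isometry property explicit and observe that it already forces injectivity: a kernel vector would satisfy $a\cdot a=0$ in a definite space. So for the inequality $\mu(C)\le b_2^-(X)$ you need neither the Mayer--Vietoris step nor the rational-homology-sphere property of $\partial W_p$ beyond what is encoded in $b_2(W_p)=-\sigma(W_p)$.

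What the paper's route buys in exchange is the orthogonal splitting $H_2(X;\R)\cong H_2(W;\R)\oplus H_2(Z;\R)$, hence $b_2^-(X)=\mu(C)+b_2^-(Z)$. This identifies the defect in the inequality topologically with $b_2^-$ of the complement $Z$, which is the content behind the paper's later remark on the equality case.
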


\begin{proof}
As seen above, locally, the double cover over a smoothing of a plane curve singularity
$f_p(x,y)=0$ is the Milnor fibre of the suspended hypersurface singularity
\[
 z^2+f_p(x,y)=0.
\]
and its Milnor fibre has negative-definite intersection form. In fact, it is the corresponding ADE root lattice of rank $\mu_p$.

The boundaries $\partial W_p$ are links of rational double points and hence are rational homology spheres.
Let \[
W=\bigsqcup_{p\in\operatorname{Sing}(C)}W_p
\qquad \text{ and } \qquad
Z=\overline{X\setminus\operatorname{int}(W)}.
\] 
The Mayer--Vietoris sequence with real coefficients contains
\[
 H_2(\partial W;\R)
 \longrightarrow H_2(W;\R)\oplus H_2(Z;\R)
 \longrightarrow H_2(X;\R)
 \longrightarrow H_1(\partial W;\R).
\]
Both outer terms vanish, so the middle map is injective.
The direct sum of the local negative-definite lattices therefore embeds in the negative part of the global intersection form.
\end{proof}

\begin{proof}[Proof of Theorem~\ref{thm:milnor-bound2}]
Let $X$ be the double cover of $\CP$, ramified over the global smoothing of $C$.
By Lemma~\ref{lem:betti-signature}, $b_2^-(X) \le 3m^2-3m+1$, and by Lemma~\ref{lem:local-lattices} $\sum_p \mu_p \le b_2^-(X)$.
Concatenating the two inequalities, we get the required statement.
\end{proof}

\begin{remark}

For $D=6$, the branched double cover satisfies
\[
b_1(X)=0,\qquad b_2(X)=22,\qquad \sigma(X)=-16.
\]
Thus it has the same rational Betti numbers and signature as a K3 surface. In particular,
\[
b_2^{-}(X)=19,
\]
which is the right-hand side of~\eqref{eq:milnor-bound}.
\end{remark}
\begin{remark}
Hirzebruch's proof of the above inequality in the \emph{algebraic} category is a bit more involved. Let us recall crucial steps of his reasoning.
\begin{enumerate}
\item Hirzebruch worked with a minimal resolution of the singularities of the branched cover $X$ and he used the fact that the number of exceptional curves in the minimal resolution of singularity $W_{m}$ with $W\in \{A,D,E\}$ is $m$, and the union of such curves spans a vector subspace in the N\'eron--Severi group of the resolution.
\item He then applied the topological signature theorem and the Lefschetz theorem on $(1,1)$-classes.
\item Finally, he used Brieskorn’s result on the simultaneous resolution of simple surface singularities that implies that the topology and the Hodge numbers of the minimal resolution of $X$ are the same as those of the double cover of $\mathbb{CP}^{2}$ ramified along a smooth curve of the same degree.
\end{enumerate}
\end{remark}

\subsection{A Hirzebruch-type inequality for ADE conic-line arrangements}

In this section, we rephrase Theorem~\ref{thm:milnor-bound2} for conic-line arrangements.



Recall that the only ADE singularities with smooth branches are $A_{2t-1}$, when two branches have contact of order $t$, and $D_{2t+2}$ which is a singularity of type $A_{2t-1}$ plus a transverse branch.

\begin{theorem}[Topological Hirzebruch-type inequality]\label{thm:hirzebruch-top}
Let $C$ be either a smooth ADE curve arrangement with normalisation $\sqcup F_i$ or a topological realisation of an ADE conic-line of $k$ conics and $\ell$ lines, with $\ell$ even.
Suppose that all singularities of $C$ have non-singular branches, and call $a_{2t-1}$ (respectively, $d_{2t+2}$) the number of singularities of type $A_{2t-1}$ (resp., $D_{2t+2}$).

Then, in the case of a smooth ADE arrangement:
\begin{equation}\label{eq:milnor-bound-smooth-branches}
\sum_{t\ge 3} (t-2)a_{2t-1} + \sum_{t \ge 6} (t-2)d_{2t+2} \le  a_1 + d_4 + 2 + 3m - \sum_i \frac{3\chi(F_i)}2,    
\end{equation}
while in the case of a topological realisation of an ADE conic-line arrangement:
\begin{equation}\label{eq:main-hirzebruch}
a_5 + 2a_7 + d_8 + 2d_{10} + \frac{3\ell}2 \le 2 + a_1 + d_4.
\end{equation}
\end{theorem}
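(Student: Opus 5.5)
The plan is to obtain both inequalities from Theorem~\ref{thm:milnor-bound2}, $\mu(C)\le 3m^2-3m+1$. The quadratic term $m^2$ will be eliminated using an \emph{exact} Euler characteristic identity for the smoothing $F$. Note that $D=2k+\ell$ is even exactly when $\ell$ is even, so Theorem~\ref{thm:milnor-bound2} applies in the conic-line case.

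\emph{Step 1: an exact identity.} As in the proof of Lemma~\ref{lem:smoothing-genus}, write $\beta_p$ for the number of branches at $p$, so that $n=\sum_p\beta_p$. Then
\[
\chi(F)=\sum_i\chi(F_i)-\sum_p(\beta_p-1)-\mu(C).
\]
I want equality $\chi(F)=3D-D^2$, and each case supplies it differently.
\begin{itemize}
\item In the smooth case, it is the equality statement of Lemma~\ref{lem:smoothing-genus}, which rests on Kronheimer--Mrowka.
\item In the topological conic-line case, that lemma only gives an inequality, and in the wrong direction. I will instead use the purely combinatorial identity of Lemma~\ref{l:selfintersection}. Evaluating it on the multiplicity multisequence of an ADE conic-line arrangement reproduces exactly the adjunction-type count, so $\chi(F)=3D-D^2$ holds combinatorially.
\end{itemize}
Checking that the multisequence bookkeeping of Lemma~\ref{l:selfintersection} matches $\mu_p$ and $\beta_p$ for each of $A_{2t-1}$ and $D_{2t+2}$ is the one step I expect to need real care. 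For instance, $A_{2t-1}$ has multisequence $(2,\dots,2)$ of length $t$, and $D_{2t+2}$ has $(3,2,\dots,2)$ with $t-1$ twos. A short local computation, $\sum_j(m_j^2-m_j)$ versus $\mu_p+\beta_p-1$, should settle it.

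\emph{Step 2: elimination.} Put $D=2m$ in the identity. This gives
\[
4m^2=\mu(C)+\sum_p(\beta_p-1)-\sum_i\chi(F_i)+6m.
\]
Substitute $3m^2=\tfrac34(\cdots)$ into Theorem~\ref{thm:milnor-bound2}. Clearing denominators yields
\[
\sum_p\bigl(\mu_p-3(\beta_p-1)\bigr)\le 4+6m-3\sum_i\chi(F_i).
\]

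\emph{Step 3: local weights.} From Table~\ref{table:ADE}:
\begin{itemize}
\item $A_{2t-1}$ has $\mu_p=2t-1$ and $\beta_p=2$, contributing $2(t-2)$;
\item $D_{2t+2}$ has $\mu_p=2t+2$ and $\beta_p=3$, also contributing $2(t-2)$.
\end{itemize}
Halving, the terms with $t=2$ vanish. The terms with $t=1$, namely $A_1$ and $D_4$, each contribute $-1$ and move to the right-hand side as $a_1+d_4$. This gives \eqref{eq:milnor-bound-smooth-branches}. In this derivation the $d$-sum runs over $t\ge3$, which I would check against the indexing in the statement.

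\emph{Step 4: the conic-line case.} Here $\sum_i\chi(F_i)=2(k+\ell)$ and $3m=3k+\tfrac{3\ell}{2}$, so the right-hand side becomes $2+a_1+d_4-\tfrac{3\ell}{2}$. Since tangency orders satisfy $t\le4$, the only surviving left-hand terms are $a_5$, $2a_7$, $d_8$ and $2d_{10}$. This gives \eqref{eq:main-hirzebruch}.
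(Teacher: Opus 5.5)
Your proposal is correct and follows the paper's own argument: use equality in the Euler characteristic condition $\chi(F)=3D-D^2$ to eliminate $m^2$ from Theorem~\ref{thm:milnor-bound2}, then read off the local weights $2(t-2)$, then specialise to $m=k+\frac{\ell}{2}$ and $\sum_i\chi(F_i)=2(k+\ell)$. In the topological conic-line case the paper merely asserts that equality, so your derivation of it from Lemma~\ref{l:selfintersection} via the local identity $\sum_j m_j(m_j-1)=\mu_p+\beta_p-1$ is a welcome addition. The paper's computation also gives the $d$-sum over $t\ge 3$, which implies the stated $t\ge 6$ version because the extra terms are nonnegative.
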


\begin{proof}
%
We place ourself in the same notation as in the previous subsection: $C$ has degree $D = 2m$, $F$ is obtained by simultaneously smoothing all singularities of $C$, and $X$ is the double cover of $\CP$, ramified over $F$.

Note that, both in the case of topological ADE conic-line arrangements and for smooth ADE curve arrangements, equality is attained in the inequality:
\[
\sum_i \chi(F_i) - n + \sum_p (1-\mu_p) \ge 3D - D^2.
\]
On the other hand, when a configuration has only smooth branches, we can compute $n$ above:
\[
n = 2\sum_s a_s + 3\sum_s d_s.
\]
So we obtain:
\[
2m^2 = 3m - \sum_i \frac{\chi(F_i)}2 + \sum_t ta_{2t-1} + \sum_t (t+2)d_{2t+2}.
\]
Then, from~\eqref{eq:milnor-bound}:
\begin{align*}
\sum_t (4t-2)a_{2t-1} &+ \sum_t (4t+4)d_{2t+2} \le 6m^2-6m+2\\
& = 2 + 3m - \sum_i \frac{3\chi(F_i)}2 + \sum_t 3ta_{2t-1} + \sum_t (3t+6)d_{2t+2}.
\end{align*}
Rearranging, we get:
\[
\sum_{t\ge 3} (t-2)a_{2t-1} + \sum_{t \ge 3} (t-2)d_{2t+2} \le  a_1 + d_4 + 2 + 3m - \sum_i \frac{3\chi(F_i)}2,
\]
as required.

Specialising to conic-line arrangements, we have:
\[
m = k+\frac{\ell}2,\qquad \sum_i\frac{\chi(F_i)}2 = k+\ell,
\]
and we only have singularities of type $a_1, \dots, a_7$ and $d_4, \dots, d_{10}$, so
\[
a_5 + 2a_7 + d_8 + 2d_{10} + \frac{3\ell}2 \le 2 + a_1 + d_4,
\]
as we set out to prove.
\end{proof}

\begin{remark}
    In fact, when dealing with topological realisations, one can bump up the assumption to asking that all components have degree at most $4$: for these arrangements, too, the inequality~\eqref{eq:milnor-bound-smooth-branches} holds.
    For those realisations, the locally-flat minimal-genus agrees with the smooth minimal genus, which in turn agrees with the genus of a non-singular complex curve of the main degree.
    This is the key ingredient in the proof to eliminate the quadratic term $m^2$ from the inequality~\eqref{eq:milnor-bound}.
\end{remark}


\begin{remark}
For conic arrangements (no lines as irreducible components), the algebraic inequality \cite[Theorem 3.4]{BromboszczJaroslawskiPokora} detects all of $A_3,A_5,A_7$ and has stronger coefficients. The advantage of our proof of~\eqref{eq:main-hirzebruch} is that it
only uses four-dimensional topology, and therefore applies to positive topological ${\rm ADE}$ realisations. This is very important since we are not using any variation on the Bogomolov--Miyaoka--Yau inequality, including the orbifold one from \cite{Kobayashi}.
\end{remark}

\subsection{An algebro-geometric perspective}\label{ss:AG}

We conclude this section by explaining the meaning of equality in Theorem~\ref{thm:hirzebruch-top}.
In the algebraic setting, equality is equivalent to the curve being \emph{maximising} and hence, by a theorem of Dimca and the second author~\cite{DimcaPokoraMaximizing}, to its freeness.

We need the following definitions.
Denote by $S = \mathbb{C}[x,y,z]$ the coordinate ring of $\mathbb{CP}^{2}$.

\begin{definition}
Let $C = \{f=0\} \subset \mathbb{CP}^{2}$ be a reduced curve of degree ${\rm deg}(f) = D$. Consider the graded $S$-module of Jacobian syzygies of $f$, namely
\[
AR(f)=\{(a,b,c)\in S^{\times 3} : af_x+bf_y+cf_z=0\}.\]
The minimal degree of non-trivial Jacobian relations for $f$ is defined as
\[
{\rm mdr}(f):=\min\{r : AR(f)_r\neq 0\}.
\]
\end{definition}

We now recall the notion of a free plane curve. First we recall the definition of Tjurina number of a reduced plane curve singularity.

Let $(C,0)$ be the curve singularity in $\C^2$ defined by $f(x,y) = 0$.
The \emph{Tjurina number} of $(C,0)$ is
\[
\tau_{0} = \dim_\C \C[[x,y]]/(f,\partial_xf, \partial_y f).
\]
The \emph{Tjurina number} $\tau(C)$ of a plane curve $C$ is the sum of the Tjurina numbers of all of its singularities.

Note that, by definition, $\tau(C) \le \mu(C)$ for every $C$.

\begin{definition}
\label{d2}
Let $C = \{ f=0 \}$ be a reduced curve in $\mathbb{CP}^{2}$ of degree $D$. Then the curve $C$ with $r:={\rm mdr}(f)\leq (D-1)/2$ is \emph{free} if 
\begin{equation}
\label{duPles}
(D-1)^{2} - r(D-r-1) = \tau(C).
\end{equation}
The pair $(d_{1}, d_{2}) = ({\rm mdr}(f),  D -{\rm mdr}(f)-1)$ is called the \emph{pair of exponents} of $C$.
\end{definition}
\begin{proposition}
\label{cor:equality-freeness}
Let $C\subset\mathbb{CP}^{2}$ be a reduced conic-line arrangement of even degree \(D=2m>4\) with only
ADE singularities. Then the following conditions are equivalent:
\begin{enumerate}
    \item equality holds in \eqref{eq:main-hirzebruch}, that is,
    \[
    a_5+2a_7 + d_8+2d_{10}+ \frac{3}{2}\ell = a_{1}+d_{4}+2;
    \]
    \item the total Milnor number of \(C\) is maximal, i.e. equality holds in~\eqref{eq:milnor-bound}:
    \[ \mu(C)=3m^{2}-3m+1;
    \]
    \item the total Tjurina number is maximal, i.e. it satisfies
    \[
    \tau(C)=3m^{2}-3m+1;
    \]
    \item \(C\) is a maximising curve;
    \item \(C\) is a free plane curve with exponents
    \[
    (d_{1},d_{2}) = (m-1,m).
    \]
\end{enumerate}
\end{proposition}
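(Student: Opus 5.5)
The plan is to prove the cycle of equivalences $(1)\Leftrightarrow(2)\Leftrightarrow(3)\Leftrightarrow(4)$ and then $(3)\Leftrightarrow(5)$.

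For $(1)\Leftrightarrow(2)$, retrace the proof of Theorem~\ref{thm:hirzebruch-top}. For a complex conic-line arrangement the Euler characteristic identity is an equality, so
\[
2m^2 = 3m - (k+\ell) + \sum_t t\,a_{2t-1} + \sum_t (t+2)\,d_{2t+2}.
\]
The difference $3m^2-3m+1-\mu(C)$ is then exactly equal to the difference between the two sides of~\eqref{eq:main-hirzebruch}. Every step of that derivation was an algebraic rearrangement, apart from the final application of~\eqref{eq:milnor-bound}. Hence equality in one inequality is equivalent to equality in the other.

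For $(2)\Leftrightarrow(3)$, note that ADE singularities are quasi-homogeneous. By Saito's theorem, $\tau_p=\mu_p$ at every singular point, so $\tau(C)=\mu(C)$.

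For $(3)\Leftrightarrow(4)$, use the definition of a maximising curve due to Persson~\cite{Persson}: a curve of even degree $2m$ with only simple singularities whose total Milnor number attains $3m^2-3m+1$. Equivalently, it is a curve for which the double cover has the maximal Picard number $h^{1,1}$. This equivalence is essentially definitional. If the paper's intended definition is the Picard-number one, invoke Lemma~\ref{lem:local-lattices} together with Brieskorn simultaneous resolution: the $\mu$ exceptional classes together with the pullback of $h$ span a rank $\mu+1$ sublattice of $NS$. The bound $\rho\le h^{1,1}=3m^2-3m+2$ then shows that maximality of $\mu$ is the same as $\rho=h^{1,1}$.

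The substantive step is $(3)\Leftrightarrow(5)$. Here the plan is to invoke du Plessis--Wall: for a reduced curve with $r={\rm mdr}(f)$,
\[
(D-1)(D-r-1)\le \tau(C)\le (D-1)^2-r(D-r-1),
\]
and the upper bound is attained if and only if $C$ is free when $r\le (D-1)/2$.

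First, show that $\tau(C)=3m^2-3m+1$ forces $r=m-1$. The lower bound $(2m-1)(2m-r-1)\le 3m^2-3m+1$ gives $r\ge m-1$ after a short computation. For $r\le (D-1)/2=m-\tfrac12$, i.e. $r\le m-1$, we then have $r=m-1$. If instead $r\ge m$, the du Plessis--Wall bound for $r>(D-1)/2$ (namely $\tau\le (D-1)^2-r(D-r-1)-\binom{2r+2-D}{2}$) should be checked against $3m^2-3m+1$ to rule it out.

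With $r=m-1$, we get $(D-1)^2-r(D-r-1)=(2m-1)^2-(m-1)m=3m^2-3m+1$. So equality holds, $C$ is free, and the exponents are $(m-1,m)$.

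Conversely, freeness with exponents $(m-1,m)$ gives $\tau=(2m-1)^2-(m-1)m=3m^2-3m+1$ directly. This is the content of~\cite{DimcaPokoraMaximizing}, which one could cite instead.

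The main obstacle I expect is ruling out ${\rm mdr}(f)\ge m$ in the high-$r$ regime, where the inequality arithmetic must be done carefully. The hypothesis $D>4$ presumably enters here, or in excluding degenerate small cases.
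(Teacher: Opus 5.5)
Your steps $(1)\Leftrightarrow(2)\Leftrightarrow(3)\Leftrightarrow(4)$ are the paper's argument. One harmless slip: the difference between the two sides of \eqref{eq:main-hirzebruch} is \emph{twice} $3m^2-3m+1-\mu(C)$, not equal to it. Where you depart from the paper is $(3)\Rightarrow(5)$. The paper simply cites \cite[Theorem~2.9]{DimcaPokoraMaximizing}; you try to derive it from the du Plessis--Wall bounds, and that derivation has a genuine gap.

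The lower bound $(2m-1)(2m-1-r)\le 3m^2-3m+1$ is equivalent to $r(2m-1)\ge m(m-1)$, i.e.\ $r\ge m(m-1)/(2m-1)$. This threshold is always less than $m/2$, so it gives $r\ge m-1$ only for $m\le 3$. For instance, with $m=4$ and $r=2$ you have $35\le 37\le 39=49-2\cdot 5$, so both bounds are compatible with $\tau=37$. More generally, on $r\le m-1$ the upper bound $(2m-1)^2-r(2m-1-r)$ is decreasing in $r$ and equals $3m^2-3m+1$ exactly at $r=m-1$. Hence for every $r$ with $m(m-1)/(2m-1)\le r\le m-2$, du Plessis--Wall alone cannot exclude a non-free curve with $\tau=3m^2-3m+1$. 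This is exactly where the ADE hypothesis must be used a second time (so far you have used it only to get $\tau=\mu$), and your argument never invokes it there.

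The missing ingredient is a lower bound on $\mathrm{mdr}(f)$ coming from the singularities. For reduced curves with only weighted homogeneous singularities one has $\mathrm{mdr}(f)\ge \alpha_C D-2$ (Dimca; Dimca--Sernesi), where $\alpha_C$ is the minimal Arnold exponent of the singular points. For simple singularities $\alpha_C>\tfrac12$ (e.g.\ $\tfrac12+\tfrac1{k+1}$ for $A_k$ and $\tfrac12+\tfrac1{2(k-1)}$ for $D_k$), so $\mathrm{mdr}(f)>m-2$, i.e.\ $r\ge m-1$.

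With this in hand the rest of your argument goes through. In fact the regime you flagged as the main obstacle is the easy one: writing $r=m+s$ with $s\ge 0$, the refined du Plessis--Wall bound gives $\tau\le 3m^2-3m-s^2-2s<3m^2-3m+1$. So either supply that $\mathrm{mdr}$ lower bound, or cite Dimca--Pokora directly for $(4)\Leftrightarrow(5)$ as the paper does. Your converse $(5)\Rightarrow(3)$ is fine as written, since it is immediate from the definition of freeness with $\mathrm{mdr}(f)=m-1$.
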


\begin{proof}
The equivalence of \textup{(1)} and \textup{(2)} follows immediately
from the following observation:
\begin{equation}\label{eq:Hirzebruch-Milnor-defect}
a_{1}+d_{4}+2
-
\sum_{t\geq 3}(t-2)a_{2t-1}
-
\sum_{t\geq 3}(t-2)d_{2t+2}
-
\textstyle{\frac{3}{2}}\ell
=
2\left(3m^{2}-3m+1-\mu(C)\right).
\end{equation}

Next, recall that every ADE-singularity is weighted homogeneous and consequently its Milnor and Tjurina numbers coincide, i.e. $\tau_{p}=\mu_{p}$ for every $p\in\operatorname{Sing}(C)$.
Therefore
\[
\tau(C)
=
\sum_{p\in\operatorname{Sing}(C)}\tau_{p}
=
\sum_{p\in\operatorname{Sing}(C)}\mu_{p}
=
\mu(C),
\]
which proves the equivalence of\/ \textup{(2)} and \textup{(3)}.

By definition, a reduced plane curve of even degree $2m>4$ with only
ADE singularities is maximising when
\[
\tau(C)=3m(m-1)+1.
\]
Thus \textup{(3)} and \textup{(4)} are equivalent. Finally, by
\cite[Theorem~2.9]{DimcaPokoraMaximizing}, such a curve is maximising if
and only if it is free with exponents
\[
(m-1,m).
\]
This proves the equivalence of\/ \textup{(4)} and \textup{(5)}.
\end{proof}
\begin{remark}
Under the assumptions of
Proposition~\ref{cor:equality-freeness}, equality in Theorem \ref{thm:hirzebruch-top} is equivalent
to
\[
AR(f)
\simeq
S(-(m-1))\oplus S(-m).
\]
In particular,
\[
\operatorname{mdr}(f)=m-1.
\]
Equivalently, the Milnor algebra
\[
M(f)=S/(f_{x},f_{y},f_{z})
\]
has the minimal graded free resolution
\[
0
\longrightarrow
S(-(3m-2))\oplus S(-(3m-1))
\longrightarrow
S^{3}(-(2m-1))
\longrightarrow
S
\longrightarrow
M(f)
\longrightarrow
0.
\]
\end{remark}
\begin{remark}[Topological meaning of equality]
The numerical equivalence between equality in Theorem~5.2 and
\[
\mu(C)=b_{2}^{-}(X)
\]
continues to hold for a positive topological ADE realisation.
(Here $X$ is the double cover of $\CP$ ramified over a smoothing of $C$, as above.)
In that case,
the direct sum of the local ADE Milnor lattices has rank \(b_{2}^{-}(X)\).
Since the natural map
\[
\bigoplus_{p\in\operatorname{Sing}(C)}
H_{2}(W_{p};\mathbb{R})
\longrightarrow
H_{2}(X;\mathbb{R})
\]
is injective and its image is negative definite, equality implies that this
image is a maximal negative-definite subspace of \(H_{2}(X;\mathbb{R})\).
Equivalently, \textit{its orthogonal complement is positive definite}.

This is the natural four-dimensional topological interpretation of the
equality case. It must, however, be distinguished from freeness. Freeness is
an algebraic property defined in terms of the Jacobian syzygies of a
homogeneous polynomial defining a complex projective curve. There is no
corresponding Jacobian module for an abstract collection of locally flat
spheres in $\mathbb{CP}^{2}$. Thus equality for a purely topological ADE
realisation does not, by itself, define or imply freeness. The freeness
conclusion in Proposition~\ref{cor:equality-freeness} requires \(C\) to be a
reduced complex algebraic curve. 

Moreover, unlike classical Hirzebruch-type inequalities derived from variants of the Bogomolov--Miyaoka--Yau inequality, the topological version can detect homological properties of complex plane curves. This feature constitutes the principal novelty of this approach.
\end{remark}

\begin{remark}
Identity \eqref{eq:Hirzebruch-Milnor-defect} shows that the difference
between the two sides of the Hirzebruch-type inequality is exactly twice
the defect from the maximal Milnor number. More precisely, if
\[
\operatorname{def}_{\mathrm{H}}(C)
:=
a_{1}+d_{4}+2
-
\left(
\sum_{t\geq 3}(t-2)a_{2t-1}
+
\sum_{t\geq 3}(t-2)d_{2t+2}
+
\frac{3}{2}\ell
\right)
\]
and
\[
\operatorname{def}_{\mu}(C)
:=
3m^{2}-3m+1-\mu(C),
\]
then
\[
\operatorname{def}_{\mathrm{H}}(C)
=
2\operatorname{def}_{\mu}(C).
\]
In particular, the inequality in Theorem~5.2 is sharp precisely for
maximising curves. Within the class of algebraic conic-line arrangements
with ADE singularities, sharpness is determined by the numerical
singularity data and forces freeness with the uniquely determined exponents
\((m-1,m)\).
\end{remark}

\section{Divisible arrangements}\label{s:divisible}

Recall that in Section~\ref{ss:multisequence} we defined the multiplicity multisequence of a combinatorial conic-line arrangement.
The following is the key definition of this section.

\begin{definition}
    Let $q$ be an integer. The combinatorial conic-line arrangement $\mathcal{A}$ is $q$-\emph{divisible} if $q$ divides both $\deg\mathcal{A}$ and each entry $m_j$ in the multiplicity multisequence $(m_j)$ of $\mathcal{A}$.
    We say that it is \emph{even} if it is $2$-divisible.
\end{definition}

For instance, if the arrangement $\mathcal{A}$ has even degree and only singularities of type $A_{2t-1}$ or ordinary $2s$-fold points, then it is even. If it only has triple points (or singularities of type $D_4$) and degree divisible by 3, then it is 3-divisible.

\begin{theorem}
\label{prop:q-divisible}
Let $\mathcal{A}$ be a the combinatorial conic-line arrangement that is $q$-divisible for a prime power
$q$.
Let $k$ be the number of conics and $\ell$ be the number of lines of $\mathcal{A}$.
Suppose also that $\mathcal{A}$ is neither a pencil of conics nor a pencil of lines.
Then, if $\mathcal{A}$ is topologically realisable,
\begin{equation}\label{eq:q-divisible}
 \sum_{j=1}^N\left(m_j-\frac{6q}{q+1}\right)\leq 4k+\ell.
\end{equation}
For $q=2$, i.e. for even arrangements, this becomes
\begin{equation}
\label{eq:q2-divisible}
\sum_{j=1}^N(m_j-4)\leq 4k + \ell.
\end{equation}
\end{theorem}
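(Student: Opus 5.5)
Following the strategy of \cite{AcetoGolla} for line arrangements, I will blow up and take a cyclic branched cover. Let $C = C_1\cup\dots\cup C_r$ be a topological realisation of $\mathcal{A}$. First I resolve $C$ topologically by blowing up $N$ times, once for each entry $m_j$ of the multiplicity multisequence. This gives $Y \cong \CP\#N\overline{\mathbb{CP}}^{\,2}$ with exceptional classes $e_1,\dots,e_N$. Since the local models are cones over algebraic links with unknotted components, this step is purely local. In $Y$ the proper transforms $\widetilde C_i$ are disjoint locally-flat spheres. The total transform is
\[
\widetilde C_1\cup\dots\cup\widetilde C_r\cup E_1\cup\dots\cup E_N ,
\]
with homology class $(2k+\ell)h-\sum_j m_j e_j$. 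This class is divisible by $q$ by $q$-divisibility. I plan to branch only along the proper transforms $B=\bigsqcup_i\widetilde C_i$, which are disjoint. Their total class is
\[
[B]=Dh-\sum_j m_j e_j ,
\]
which is divisible by $q$. Here the exceptional curves at intermediate stages are handled through the usual relation between total and proper transforms, and the multiplicities $m_j$ in the classes should be understood in that sense. Lemma~\ref{l:selfintersection} gives
\[
[B]^2 = D^2-\sum m_j^2 = 4k+\ell-\sum m_j .
\]

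Let $X\to Y$ be the $\Z/q$-cyclic cover branched over $B$. For $q$ a prime power (first take $q=p^a$ and use a $\Z/p^a$ cover), the Goldschmidt-type lemma used in Lemma~\ref{lem:betti-signature} gives $b_1(X)=0$. For this I need the complement to have $H_1$ generated by meridians with the right cyclic quotient. The Euler characteristic is
\[
\chi(X)=q\chi(Y)-(q-1)\chi(B)=q(3+N)-2(q-1)r .
\]
By the $G$-signature theorem, valid in the locally-flat case, applied to a surface of self-intersection $[B]^2$ that is divisible by $q$, I get
\[
\sigma(X)=q\sigma(Y)-\frac{q^2-1}{3q}[B]^2 = q(1-N)-\frac{q^2-1}{3q}\Bigl(4k+\ell-\sum_j m_j\Bigr).
\]

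The inequality should then come from comparing $b_2^+(X)$ with an explicit lower bound. The lifts of the exceptional spheres and of the eigenspace decomposition give negative classes. The natural bound to try is that the $\zeta$-eigenspaces of $H_2(X)$ for $\zeta\ne1$ contribute nonnegative $b_2^+$. Equivalently, use $|\sigma(X)|\le b_2(X)=\chi(X)-2$, or the sharper inequality $b_2^+(X)\ge b_2^+(Y)=1$, giving
\[
\chi(X)-2+\sigma(X)\ge 2 .
\]
Substituting, the terms in $N$ combine as
\[
qN-qN+\tfrac{q^2-1}{3q}\textstyle\sum m_j .
\]
The goal is then to massage the result into $\sum(m_j-\frac{6q}{q+1})\le 4k+\ell$.

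The coefficient $\frac{6q}{q+1}$ suggests that I should instead use $b_2^-(X)\ge$ something. Concretely, each $E_j$ lifts to $q$ spheres, or to a single sphere of square $-q$. More robustly, the pullback map $H_2(Y)\to H_2(X)$ scales the form by $q$, so it preserves $b^-\ge N$ on the invariant part. Combining
\[
2b_2^-(X)=\chi(X)-2-\sigma(X)
\]
with an estimate from the non-invariant part should yield the $\frac{6q}{q+1}$ coefficient. I expect this to be the main obstacle: choosing the right definiteness estimate. My first attempt will be to show that the non-invariant eigenspaces satisfy $b_2^+\le b_2^-$-type bounds via the eigenspace signature formula. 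The hypothesis that $\mathcal A$ is not a pencil should enter to guarantee $\chi(B)$ and $b_1$ behave, and in particular that the cover is connected with $b_1(X)=0$. For a pencil the inequality genuinely fails. Setting $q=2$ recovers \eqref{eq:q2-divisible}.
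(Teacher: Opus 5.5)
Your overall set-up (blow up along the multiplicity multisequence, take a $\Z/q$-cover, apply the $G$-signature theorem, use the pulled-back exceptional lattice) is the right one. However, branching over the disjoint union $B=\bigsqcup_i\widetilde C_i$ breaks the step $b_1(X)=0$.

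The Goldschmidt lemma needs $H_1(Y\setminus B;\Z/p)\cong\Z/p$. But $\dim H_1(Y\setminus B;\Z/p)$ is the number of independent linear relations mod $p$ among the classes $[\widetilde C_i]$, and this can exceed $1$. For example, take four smooth conics of a pencil with base points $P_1,\dots,P_4$, together with the lines $P_1P_2$, $P_3P_4$, $P_1P_3$, $P_2P_4$. This is an even, non-pencil arrangement with multisequence $(6,6,6,6,2,2)$, and all four conics have the same class $2h-e_1-e_2-e_3-e_4$. The double cover branched over $B$ has $\chi(X)=2\cdot 9-16=2$ and $\sigma(X)=-10+4=-6$. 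Since $b_2(X)=\chi(X)-2+2b_1(X)=2b_1(X)\ge|\sigma(X)|$, this forces $b_1(X)\ge 3$. Indeed, if $b_1(X)=0$ held, your set-up would yield the stronger bound $6q(N+1-r)+(q+1)[B]^2\ge0$, which is false here ($-36\ge 0$).

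The paper avoids this by tubing the $\widetilde C_i$ together into a single sphere $S$ with the same class and the same square. Then $H_1(Y\setminus S)$ is cyclic, the Goldschmidt lemma applies, and $\chi(X)=q(N+3)-2(q-1)$, i.e.\ $b_2(X)=q+qN$.

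Second, you do not finish the argument, and the extra estimate you are looking for is not needed. No control of the non-invariant eigenspaces is required, and $b_2^+(X)\ge 1$ gives nothing useful (it is vacuous once $S^2\le 0$). With the tubed sphere,
\[
2b_2^-(X)=b_2(X)-\sigma(X)=2qN+\tfrac{q^2-1}{3q}S^2 .
\]
The single inequality $b_2^-(X)\ge N$, which you already identified via the transfer, is then equivalent to $(q+1)S^2+6qN\ge 0$. Substituting $S^2=4k+\ell-\sum_j m_j$ from Lemma~\ref{l:selfintersection} gives exactly the stated inequality.

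Finally, the non-pencil hypothesis is not what makes the cover well behaved, and the inequality does not fail for pencils. For a pencil $S^2=0$, i.e.\ $\sum_j m_j=4k+\ell$, and the inequality holds trivially.
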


The proof of the theorem follows rather closely that of~\cite[Theorem~3.1]{AcetoGolla}

\begin{proof}
We write
\[
h,e_1,\ldots,e_N
\]
for the standard diagonal basis of $H_2(Y;\mathbb{Z})$, where $N$ is the length of the multiplicity multisequence of $\mathcal{A}$ and the $e_j$ are the total-transform exceptional classes.

Let $C$ be a topological realisation of $\mathcal{A}$.
We can resolve each singularity by blowing up the appropriate number of times at each point $p$:
for each component $C_i$ of $C$, we need to blow up at $p$ \emph{along} $C_i$ exactly $\max_{C_j} I(C_i,C_j,p)$ times.

After the resolution, tube the pairwise disjoint strict transforms of the $r=k+\ell$ components together.
In other words, do an embedded connected sum of all the components of the strict transform.
Since all components are spheres, the result is an embedded sphere $S\subset Y$ with class
\begin{equation}\label{eq:S-class}
    [S]=Dh-\sum_{j=1}^Nm_je_j,
\end{equation}
and whose self-intersection is
\begin{equation}\label{eq:S-square}
S^2= 4k + \ell - \sum_{j=1}^Nm_j,
\end{equation}
by combining~\eqref{eq:S-class} and~\eqref{eq:noadjunction}.

The class \eqref{eq:S-class} is divisible by $q$, so there exists a $q$-fold cyclic cover
\[
 X\longrightarrow Y
\]
branched over $S$.
The homology argument of~\cite[Lemma 3.3]{AcetoGolla}, which is similar to what we have used in the proof of Theorem~\ref{thm:hirzebruch-top} above, gives:
\[
 b_1(X)=0,
 \qquad
 b_2(X)=q+qN.
\]
The $G$-signature theorem gives
\[
 \sigma(X)=q(1-N)-\frac{q^2-1}{3q}S^2.
\]
The pullback of the exceptional lattice
$\langle e_1,\ldots,e_N\rangle$ is negative definite of rank $N$: over
$\mathbb Q$, the pullback is injective by the transfer map and its intersection
form is multiplied by $q$.  Hence $b_2^-(X)\geq N$.

Using $2b_2^-(X)=b_2(X)-\sigma(X)$, the inequality $b_2^-(X)\geq N$ is
equivalent to
\[
 (q+1)S^2+6qN\geq0.
\]
Substituting \eqref{eq:S-square} and rearranging gives
\eqref{eq:q-divisible}.
\end{proof}

As an immediate corollary, we get Theorem~\ref{thm:divisible-weak} from the introduction.

\begin{proof}[Proof of Theorem~\ref{thm:divisible-weak}]
    Since $\mathcal{A}$ is $q$-divisible, we can apply Theorem~\ref{prop:q-divisible} above. Equation~\eqref{eq:q-divisible} can now be rewritten as:
    \[
    \sum_m (m-4)t_m = \sum_{j=1}^N(m_j-4)\leq 4k + \ell.\qedhere
    \]
\end{proof}

\section{Odd curve configurations}\label{s:odd}

We start the section by defining odd combinatorial conic-line configurations and odd (topological or smooth) curve configuration.

\begin{definition}
    A combinatorial conic-line arrangement $\mathcal{A}$ is said to be \emph{odd} if $\deg \mathcal{A}$ is odd and each $m_j$ in the multiplicity multisequence $(m_j)$ of $\mathcal{A}$ is odd.

    Likewise, a (topological or smooth) curve configuration $C$ in $\CP$ is \emph{odd} if $\deg C$ is odd and all entries in its multiplicity multisequence are odd.
\end{definition}

While the terminology might suggest a dichotomy between odd and even conic-line arrangement, it is quite clear that there are conic-line arrangements that are neither even nor odd---for instance three lines meeting at three nodes form a configuration that is neither even nor odd.

Also, the definition immediately implies that any (topological or smooth) realisation of an odd conic-line arrangement is odd.

\begin{theorem}\label{prop:odd}
Let $C$ be a topological realisation of an odd combinatorial conic-line arrangement $\mathcal{A}$, comprising $k$ conics and $\ell$ lines.
Let $(m_1, \dots, m_N)$ be the multiplicity multisequence of $\mathcal{A}$.
Then
\begin{equation}\label{eq:KM}
4k + \ell -1 \equiv \sum_j (m_j-1) \pmod{16}.
\end{equation}

If, furthermore, $C$ is a \emph{smooth} realisation and $\mathcal{A}$ is neither a pencil of lines nor a pencil of conics,
\begin{equation}\label{eq:10/8}
    \sum_j (m_j-9) \le 4k + \ell - 9.
\end{equation}
\end{theorem}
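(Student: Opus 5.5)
We will follow the approach of the divisible case, replacing branched covers with characteristic surfaces and spin structures. Let $Y \cong \CP \# N\overline{\mathbb{CP}}^{\,2}$ be obtained by resolving all singularities of $C$, as in the proof of Theorem~\ref{prop:q-divisible}. Tube together the strict transforms of the $r=k+\ell$ components. The result is a locally-flat sphere $S\subset Y$ with
\[
[S]=Dh-\sum_{j=1}^N m_j e_j, \qquad S^2 = 4k+\ell-\sum_j m_j,
\]
by~\eqref{eq:S-class} and~\eqref{eq:S-square}. Since $D$ and all $m_j$ are odd, $[S]$ is congruent modulo $2$ to $h-\sum_j e_j$. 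This is the characteristic class of $Y$, so $S$ is a characteristic sphere.

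For the congruence~\eqref{eq:KM}, we apply the Kervaire--Milnor/Rokhlin theorem for characteristic spheres, in its topological (Freedman--Kirby) version. For a locally-flat characteristic surface $F$ in a closed simply connected topological 4-manifold $Y$, one has $\sigma(Y)-F^2 \equiv 8\,\mathrm{Arf}(Y,F) + 8\,\mathrm{ks}(Y) \pmod{16}$. For a sphere the Arf invariant vanishes, and $Y$ is smooth, so $\mathrm{ks}(Y)=0$. Hence $\sigma(Y)\equiv S^2 \pmod{16}$. Substituting $\sigma(Y)=1-N$ gives
\[
1-N\equiv 4k+\ell-\sum_j m_j,
\]
which rearranges to $4k+\ell-1\equiv\sum_j(m_j-1) \pmod{16}$. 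The only point needing care is citing the locally-flat version correctly, including the Kirby--Siebenmann term.

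For~\eqref{eq:10/8}, assume $C$ is smooth, so $S$ is smoothly embedded. Remove a tubular neighbourhood $\nu S$, which is a disc bundle of Euler number $S^2$ over $S^2$. Since $S$ is characteristic, $W=Y\setminus \nu S$ is spin, with boundary the lens space $L(|S^2|,1)$, or $S^1\times S^2$ if $S^2=0$. We expect $S^2<0$ when $\mathcal{A}$ is not a pencil; this can be checked using~\eqref{eq:noadjunction} and is where the pencil exclusion enters. Then glue to $W$ a spin filling of the (oppositely oriented) boundary lens space to get a closed spin 4-manifold $Z$. The natural choice is the linear plumbing of $(-2)$-spheres with a $(-2)$-chain of length $|S^2|-1$, which is spin and negative definite; alternatively, one can pass to a cover or use a rational-ball filling when one is available. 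Then apply Furuta's inequality $b_2(Z)\ge \frac{10}{8}|\sigma(Z)|+2$ to $Z$.

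The Betti numbers are computed by Mayer--Vietoris. They are $b_2^+(W)=1$ and $b_2^-(W)=N-1$, possibly shifted by the sign of $S^2$, and for the plumbing $P$ they are $b_2^-(P)=|S^2|-1$. It may be cleaner to use the variant for spin manifolds with rational homology sphere boundary, or to double the construction so that Furuta applies with $b_2^+\ge1$ nontrivially. After substituting, the inequality becomes linear in $N$ and $S^2$, and should rearrange to $\sum_j(m_j-9)\le 4k+\ell-9$. The main obstacle is getting the bookkeeping exactly right so that the constant $9$ emerges: choosing the filling, handling the signature of $Z$ (which must be nonzero, or else the inequality gives only $b_2\ge 2$), and making sure that the case $S^2\ge 0$ is excluded or treated separately. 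If the direct gluing does not produce the sharp constant, we will instead take the double branched cover of $Y$ along a suitable even class built from $S$ and apply $10/8$ there.
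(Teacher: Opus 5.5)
Your proof of the congruence~\eqref{eq:KM} is the paper's argument. You tube the strict transforms into a characteristic sphere $S$ and apply the locally-flat Kervaire--Milnor theorem; the paper cites Lee--Wilczynski, and your Freedman--Kirby formula with $\mathrm{Arf}=0$ and $\mathrm{ks}(Y)=0$ amounts to the same thing. For~\eqref{eq:10/8} your outline (cut out $\nu S$, close up with a spin filling, apply Furuta) is also the paper's. As written, however, there is a genuine gap: the filling you choose cannot be glued in, and the computation that should produce the constant $9$ is never carried out.

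Set $n=-S\cdot S=\sum_j m_j-4k-\ell>0$. Since $\nu S$ is negative definite of rank one, $\sigma(W)=\sigma(Y)+1=2-N$ and $b_2(W)=N$.

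\textbf{Why the $(-2)$-chain fails.} Suppose you glue in the negative-definite $(-2)$-chain of length $n-1$. Novikov additivity and the congruence you just proved give $\sigma(Z)=\sigma(Y)+2-n\equiv 2-2n\pmod{16}$. This is not $0\pmod{16}$ unless $n\equiv 1\pmod 8$, so $Z$ cannot be closed and spin. The reason is orientation: the $(-2)$-chain has oriented boundary $\partial W$, not $-\partial W$. It is the $(+2)$-chain that has the same oriented boundary as $\nu S$, namely that of the disc bundle of Euler number $-n$. If you push the numbers through anyway, Furuta would force $N+n\le 3$. That fails for the dual Hesse arrangement ($\ell=9$, $N=12$, $n=27$), so no bookkeeping turns this into~\eqref{eq:10/8}.

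\textbf{The correct filling.} What you need is the \emph{positive}-definite chain of $n-1$ spheres of self-intersection $+2$, which is what the paper glues in. Then $\sigma(Z)=\sigma(Y)-S\cdot S\equiv 0\pmod{16}$, consistent with~\eqref{eq:KM}, and
\[
b_2(Z)=N+n-1,\qquad \sigma(Z)=n+1-N.
\]
The target inequality is equivalent to $n\le 9(N-1)$.
\begin{itemize}
\item If $\sigma(Z)\le 0$, then $n\le N-1$, and the inequality is automatic since $N\ge 1$.
\item If $\sigma(Z)>0$, Furuta's $4b_2(Z)\ge 5|\sigma(Z)|+8$ gives $n\le 9N-17$, which is even stronger.
\end{itemize}
The paper uses the weaker form $\tfrac54|\sigma|\le b_2$, which yields exactly the constant $9$. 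No relative version of $10/8$, doubling, or auxiliary branched cover is needed.
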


The proof follows very closely the proofs of~\cite[Theorem~3]{G-odd} and of~\cite[Theorem~4.1]{AcetoGolla}.
A generalisation of the first point in the theorem above was hinted at in~\cite[Remark~6]{G-odd}.

\begin{proof}
    We start by proving \eqref{eq:KM}. This follows exactly as in the proof of~\cite[Theorem~3]{G-odd}, as a corollary of the Kervaire--Milnor theorem.
    More precisely, the strict transform of $C$ after the blow-ups consists of a collection of locally-flatly embedded 2-spheres, whose homology class is odd in the basis $h, e_1, \dots, e_N$ of $H_2(Y)$.
    By tubing all these spheres together, we obtain a characteristic 2-sphere $S$ in the sense of Kervaire--Milnor~\cite{KervaireMilnor}.
    By the locally-flat version of the Kervaire--Milnor theorem, due to Lee and Wilczynski~\cite{LeeWilczynski},
    \[
    S \cdot S \equiv \sigma(Y) \pmod{16}.
    \]
    But from Equation~\ref{eq:S-square} and since $\sigma(Y) = 1 - N$, we obtain
    \[
    3D-2r-1 \equiv \sum_j (m_j-1) \pmod{16}.
    \]

    Equation~\eqref{eq:10/8} follows almost exactly as in the proof of~\cite[Theorem~4.1]{AcetoGolla}.
    The key point is that, if $\mathcal{A}$ is not a pencil, then $S\cdot S < 0$, and we can surger out (a neighbourhood of) $S$ and glue in a plumbing of $|S\cdot S| -1$ spheres of self-intersection $+2$, to get a \emph{spin} closed, smooth 4-manifold, $W$.

    The second Betti number and signature of $W$ are easily computed:
    \begin{align*}
        b_2(W) &= b_2(Y) - 1 + |S\cdot S| -1 = N - 1 - 4k - \ell + \sum_j m_j,\\
        \sigma(W) &= \sigma(Y) + 1 + |S\cdot S| -1 = 1 - N - 4k - \ell + \sum_j m_j.
    \end{align*}
    Applying Furuta's 10/8-theorem, asserting that, for such a $W$, $\frac54 |\sigma(W)| \le b_2(W)$, yields the desired inequality.
\end{proof}

As an immediate corollary, we get Theorem~\ref{thm:odd-weak} from the introduction.

\begin{proof}[Proof of Theorem~\ref{thm:odd-weak}]
    Since $\mathcal{A}$ is $q$-divisible, we can apply Theorem~\ref{prop:odd} above. Equation~\eqref{eq:KM} can now be rewritten as:
    \[
    \sum_m (m-1)t_m = \sum_{j=1}^N(m_j-1) \equiv 4k + \ell - 1 \pmod 16.
    \]
    Equation~\eqref{eq:10/8} can be rewritten as:
    \[
    \sum_m (m-9)t_m = \sum_{j=1}^N(m_j-9) \equiv 4k + \ell - 9.\qedhere
    \]
\end{proof}

\section{ADE-transverse conic-line configurations}\label{s:bonus}

We now combine some of the previous situations to give a more comprehensive statement (which, however, does not follow by combining the results from previous sections).

\begin{theorem}\label{t:random}
    Let $C \subset \mathbb{CP}^2$ be a topological realisation of a combinatorial conic-line configuration $\mathcal{A}$, comprising $\ell$ lines and $k$ conics, with $a_{2t-1}$ singularities of type $A_{2t-1}$, $d_{2t+2}$ singularities of type $D_{2t+2}$, and $t_m$ ordinary $m$-fold points for each $m\geq 2$.
    Suppose that $\ell$ is even, and that $\mathcal{A}$ is neither a pencil of lines nor a pencil of conics. Then:
    \[
    a_5 + 2a_7 + d_8 + 2d_{10} + \sum_{\substack{m \ge 6 \\ m \text{ even}}} \frac{m-4}2t_m \le 2k + \frac\ell2 + t_2 + \sum_{m \text{ odd}} \frac{m-1}2t_m.
    \]
\end{theorem}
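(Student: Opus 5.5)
The plan is to run the branched double cover argument of Theorem~\ref{thm:milnor-bound2} on a blow-up of $\CP$, as in the proof of Theorem~\ref{prop:q-divisible}. The ordinary multiple points are handled by blowing up and the ADE points by smoothing. Negative-definite classes are collected from both kinds of points and compared with $b_2^-$, computed via the $G$-signature theorem.

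\emph{Construction.} Let $T$ be the set of ordinary multiple points, $T_{\rm odd}\subset T$ those of odd multiplicity, and $N=\#T$. Blow up once at each $p\in T$ to get $Y=\CP\#N\overline{\CP}$, with exceptional spheres $E_p$ of class $e_p$. The strict transform $\tilde C$ still has the ADE points of $C$, and its class is $Dh-\sum_{p\in T} m_pe_p$, where $D=2k+\ell$ is even because $\ell$ is even. Consider the curve
\[
B=\tilde C\cup\bigcup_{p\in T_{\rm odd}}E_p,
\qquad
[B]=Dh-\sum_{p\in T}m_pe_p+\sum_{p\in T_{\rm odd}}e_p .
\]
Every coefficient of $[B]$ is even. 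The singularities of $B$ are the original ADE points together with $m_p$ transverse nodes on each $E_p$ with $p\in T_{\rm odd}$. So $B$ is again a topological ADE arrangement, now in $Y$ rather than in $\CP$.

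Smooth all singularities of $B$ simultaneously to obtain a surface $F$. Exactly as in Lemma~\ref{lem:smoothing-genus}, $F$ is connected, and its Euler characteristic is computed from the normalisation: all components are spheres, and each Milnor fibre contributes $1-\mu_p$. Here the non-pencil hypothesis is used to ensure that every component meets another, so that the tubing really produces a connected surface. Let $X\to Y$ be the double cover branched over $F$.

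\emph{Invariants of $X$.} The Goldschmidt argument gives $b_1(X)=0$, since $H_1(Y\setminus F;\Z/2)=\Z/2$. Hence
\[
b_2(X)=2\chi(Y)-\chi(F)-2,
\qquad
\sigma(X)=2(1-N)-\tfrac12[F]^2 ,
\]
the second by the locally-flat $G$-signature theorem. This yields an explicit formula for $b_2^-(X)$ in terms of $k,\ell,\mu$, the $m_p$ and $\#T_{\rm odd}$.

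\emph{Negative-definite classes in $H_2(X;\R)$.} There are three families.
\begin{enumerate}
\item Each ADE point of $C$ contributes its root lattice, of rank $\mu_p$.
\item Each node on $E_p$ with $p\in T_{\rm odd}$ contributes an $A_1$ class, so $m_p$ classes for each such $p$.
\item For each $p\in T\setminus T_{\rm odd}$, the sphere $E_p$ meets $F$ in the even number $m_p$ of points. Its preimage is a surface with self-intersection $-2$. Alternatively, take the transfer of $e_p$, which has square $-2$ and is orthogonal to the local Milnor lattices since it can be supported away from the Milnor balls.
\end{enumerate}
Injectivity of the sum of families (1) and (2) follows from the Mayer--Vietoris argument of Lemma~\ref{lem:local-lattices}, because their boundaries are rational homology spheres.

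\emph{Main obstacle.} The step I expect to be the main obstacle is showing that the family (3) classes are linearly independent from, and orthogonal to, the span of the local lattices. Without this the negative-definite subspace is not the direct sum of the three families. I would try the transfer argument used for the exceptional lattice in Theorem~\ref{prop:q-divisible}. For extra room, one could also use the preimages of the $E_p$ with $p\in T_{\rm odd}$, which are branch-locus spheres of square $-1/2$ in $Y$-terms.

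\emph{Conclusion.} Once the families are known to be independent, we get
\[
\mu(C)+\sum_{p\in T_{\rm odd}}m_p+\#(T\setminus T_{\rm odd})\le b_2^-(X).
\]
Substituting the formula for $b_2^-(X)$ should give the stated inequality. To finish, expand $[F]^2$ using Lemma~\ref{l:selfintersection}, specialise $\chi$ of the normalisation to $2(k+\ell)$, and rewrite $\mu$ via Table~\ref{table:ADE} as in the proof of Theorem~\ref{thm:hirzebruch-top}. The quadratic terms cancel as they did there. The coefficients $\tfrac{m-4}{2}$ for even $m$ and $\tfrac{m-1}{2}$ for odd $m$ should come out of this bookkeeping, which I have not checked.
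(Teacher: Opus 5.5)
Your strategy can be made to work, but as written it has two genuine gaps. The first means that the bookkeeping you postponed does \emph{not} produce the stated inequality.

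\emph{The count of negative classes is one short at each ordinary point of odd multiplicity.} Carry out the computation, granting for now that $F$ is connected and $b_1(X)=0$. The constant term of $2b_2^-(X)$ is $2-\frac{3\ell}{2}$. Each $p\in T\setminus T_{\rm odd}$ contributes $\frac{8-m_p}{2}$, and each $p\in T_{\rm odd}$ contributes $\frac{5m_p+3}{2}$, whereas such a $p$ contributes only $2m_p$ to twice your count. The outcome is
\[
a_5+2a_7+d_8+2d_{10}+\sum_{\substack{m\ge 6\\ m\text{ even}}}\frac{m-4}{2}t_m\le 2-\frac{3\ell}{2}+t_2+\sum_{m\text{ odd}}\frac{m+3}{2}t_m ,
\]
whose right-hand side exceeds the one in the statement by $2\bigl(\#T_{\rm odd}-(k+\ell-1)\bigr)$. (If you treat nodes and triple points as $A_1$ and $D_4$ instead of putting them in $T$, the same discrepancy holds with $T_{\rm odd}$ the odd points of multiplicity at least $5$.) So you recover the theorem only when there are fewer than $k+\ell$ such points.

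The missing class is the transfer $\pi^!(e_p)$, $p\in T_{\rm odd}$, under $\pi\colon X\to Y$, exactly as you used for even $p$. It is invariant under the deck involution and has square $-2$. The local Milnor lattices and the $A_1$ classes are anti-invariant, since each local piece covers a ball, so $\pi^!(e_p)$ is orthogonal to and independent of them. In the paper's set-up, where the $m_p$ nodes on $E_p$ are also blown up, this class appears (up to the $(-2)$-spheres over those nodes) as the preimage of the strict transform of $E_p$, a sphere of square $-\frac{m_p+1}{2}$. That is why the paper counts $m_p+1$ classes there. Your idea of lifting $E_p$ itself to a class of square $-\frac12$ cannot work: after smoothing its nodes, $E_p$ is no longer a separate branch component.

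Incidentally, the step you flag as the main obstacle is not one. Apply the transfer to every $e_p$, $p\in T$, as in Theorem~\ref{prop:q-divisible}, and split $H_2(X;\R)$ into $\pm1$-eigenspaces of the involution (or use Mayer--Vietoris as in Lemma~\ref{lem:local-lattices}). This settles independence and orthogonality at once.

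\emph{$F$ need not be connected.} The argument of Lemma~\ref{lem:smoothing-genus} used that any two components of $C$ meet in $\CP$. In $Y$, components meeting only at ordinary points of even multiplicity become disjoint, and the non-pencil hypothesis does nothing about this. For four lines in general position, after blowing up the six nodes, $F=\tilde C$ is four disjoint $(-2)$-spheres. Then $H_1(Y\setminus F;\Z/2)\cong\Z/2$ is equivalent to there being a unique non-trivial mod-$2$ relation among the components of $F$, which you would have to check. Otherwise $b_1(X)>0$ and your formula for $b_2(X)$ is wrong.

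The simplest fix is to tube the $c\le k+\ell$ components of $F$ together, as the paper does with $R_C$. This lowers $\chi(F)$ by $2(c-1)\le 2(k+\ell-1)$, raising $2b_2^-(X)$ by the same amount. That increase is covered, since $2-\frac{3\ell}{2}+2(k+\ell-1)=2k+\frac{\ell}{2}$.

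With both corrections, your route gives the statement: blow up only the ordinary points and smooth the ADE ones. It is then a variant of the paper's proof, which instead resolves every singular point by blow-ups (including the ADE ones and the nodes on each odd $E_p$) and branches over $R_C\cup R_D\cup R_O$.
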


Note that a singularity of type $A_1$ is an ordinary double point, so $a_1 = t_2$, and a singularity of type $D_4$ is an ordinary triple point, so $d_4 = t_3$.

\begin{proof}
    As observed above, since lines and conics are non-singular, the only ${\rm ADE}$ singularities that can appear of them are of types $A_{2t-1}$ and $D_{2t+2}$ with $t \le 4$.
    
    Blow up the $\mathbb{CP}^2$ in the following way.
    \begin{itemize}
        \item[(a)] At each singularity of type $A_{2t-1}$, blow up $t$ times to separate the two branches of the singularity. Call $E_A$ the collection of these $\sum_t a_{2t-1}$ divisors.
        
        \item[(d)] At each singularity of type $D_{2t+2}$, first blow up until the normal crossing divisor resolution. This requires $t$ blow-ups. The configuration now contains $t+2$ nodes. Blow up once at each of these nodes. Call $R_D$ the collection of strict transforms of the exceptional divisors of the first $t$ blow-ups, for \emph{all} of these singularity, and $E_D$ the union of all exceptional divisors of these blow-ups. (So $R_D$ comprises $\sum_t t d_{2t+2}$ components and $E_D$ comprises $\sum_t (2t+2)d_{2t+2}$ components.)
        
        \item[(e)] At each ordinary $m$-fold point with $m$ even, blow up once. Call $E_E$ the collection of the $\sum_{m \ge 4, \text{ even}} t_m$ exceptional divisors. (Note that this is consistent with what happens at $A_1$-singularities in point (a) above.)
        
        \item[(o)] At each ordinary $m$-fold point with $m$ odd, blow up once, and then once at each node of the configuration (so $m$ more blow-ups). Call $R_O$ the strict transforms of the first blow-ups and $E_O$ the whole collection of the $m+1$ exceptional divisors of the blow-ups, across all of the singular points of this type. To clarify: $R_O$ comprises $\sum_{m \ge 5 \text{ odd}} t_m$ divisors, and $E_O$ comprises $\sum_{m \ge 5 \text{ odd}} (m+1)t_m$ components.
        (Note that this is consistent with what happens at $D_4$-singularities in point (a) above.)
    \end{itemize}

    Call $Y$ the corresponding blow-up of $\mathbb{CP}^2$, which has:
    \begin{align*}
    \chi(Y) &= 3 + \sum_t t a_{2t-1} + \sum_t (2t+2)d_{2t+2} + \sum_{\substack{m \ge 4 \\ m \text{ even}}} t_m + \sum_{\substack{m \ge 5 \\ m \text{ odd}}} (m+1)t_m,\\
    \sigma(Y) &= 1 - \sum_t t a_{2t-1} - \sum_t (2t+2)d_{2t+2} - \sum_{\substack{m \ge 4 \\ m \text{ even}}} t_m - \sum_{\substack{m \ge 5 \\m \text{ odd}}} (m+1)t_m,
    \end{align*}


     The strict transform $\tilde{C}$ of $C$ in $Y$ is a union of pairwise-disjoint embedded 2-spheres. Let $R_C$ be the internal connected sum of all of these spheres, which we can assume to be disjoint from $R_D$ and $R_O$: that is to say, $R_C$ is an embedded 2-sphere whose homology class is the same as the sum of the homology classes of the components of $\tilde{C}$, and whose self-intersection is the sum of the self-intersections of the components of $\tilde{C}$. In particular, since $C$ is neither a pencil of conics nor a pencil of lines, $R_C\cdot R_C < 0$.
     Finally, let $R = R_C \cup R_D \cup R_O$. By construction $R$ is a collection of $1 + \sum_t td_{2t+2} + \sum_{m \ge 5,\, m \text{ odd}} t_m$ pairwise-disjoint spheres, whose total self-intersection is:
    \[
    \ell + 4k -\left(\sum_t 2ta_{2t-1} + \sum_t (6t+4)d_{2t+2} + \sum_{\substack{m \ge 4 \\ m \text{ even}}} mt_m + \sum_{\substack{m \ge 5 \\ m \text{ odd}}} (3m+1)t_m\right)
    \]

    We claim that there is a double cover $X \to Y$ ramified over $R$.
    This follows from the fact that $\ell$ is even, and from an easy, if tedious, local computation at each singularity, depending on the case (a), (d), (e), or (o) above.

    As above, using inclusion-exclusion for the Euler characteristics and the G-signature theorem for the signature, we can compute:
    \begin{align*}
    \chi(X) &= 2\chi(Y) - \chi(R) = \\
    & = 4 + \sum_t 2ta_{2t-1} + \sum_t (2t+4)d_{2t+2} + \sum_{\substack{m \ge 4 \\ m \text{ even}}} 2t_m + \sum_{\substack{m \ge 5 \\ m \text{ odd}}} 2mt_m
    \\
    \sigma(X) &= 2\sigma(Y) - \frac12 R\cdot R\\
    &= 2 - 2k - \frac\ell2 - \sum_t t a_{2t-1} -\sum_t(t+2) d_{2t+2} + \sum_{\substack{m \ge 4 \\ m \text{ even}}} \frac{m-4}2t_m - \sum_{\substack{m \ge 5 \\ m \text{ odd}}} \frac{m+3}2 t_m.
    \end{align*}

    We claim that $b_1(X) = 0$.
    Since $X$ is the double cover of $Y$ (which has trivial $H_1$) ramified over a disjoint union of spheres of negative self-intersection, it suffices to show that there is a unique linear relation modulo 2 among the components of $R$ (see, for instance,~\cite[Section~3]{RL}).
    To show this, in turn, it suffices to show that the components of $R_D \cup R_O$ are linearly independent modulo 2: this is detected by the components of $E_D$ and $E_O$.
    
    Combining all of this together, we have
    \begin{align*}
    2b_2^-(X) &= b_2(X) - \sigma(X) = \chi(X)-2-\sigma(X)\\
    &= 2k + \frac\ell2 + \sum_t 3t a_{2t-1} + \sum_t (3t+6) d_{2t+2} - \sum_{\substack{m \ge 4 \\ m \text{ even}}} \frac{m-8}2t_m + \sum_{\substack{m \ge 5 \\ m \text{ odd}}} \frac{5m+3}2 t_m.
    \end{align*}

    We can also rewrite this as follows.
    \begin{equation}\label{e:b2m-ADE+ordinary}
        2k + \frac\ell2 + 6a_3 + 9a_5 + 12a_7 + 12d_6 + 15d_8 + 18d_{10} - \sum_{m \text{ even}} \frac{m-8}2t_m + \sum_{m \text{ odd}} \frac{5m+3}2 t_m.
    \end{equation}

    We now observe that $X$ contains:
    \begin{itemize}
        \item[(a)] an $A_{2t-1}$-configuration for each singularity of $C$ of type $A_{2t-1}$;
        \item[(d)] a $D_{2t+2}$-configuration for each singularity of $C$ of type $D_{2t+2}$;
        \item[(e)] a curve of self-intersection $-2$ for each ordinary singularity of $C$ of even multiplicity;
        \item[(o)] $m+1$ curves arranged as in the figure below for each ordinary singularity of $C$ of multiplicity $m$, for $m$ odd.
    \end{itemize}

    We now write the inequality:
    \[
    \sum_t (2t-1)a_{2t-1} + \sum_t (2t+2)d_{2t+2} + \sum_{m \ge 4 \text{ even}} t_m + \sum_{m \ge 5 \text{ odd}} (m+1)t_m \le b_2^-(X),
    \]
    or equivalently:
    \begin{equation}\label{e:countnegativedivisors}
    6a_3 + 10a_5 + 14a_7 + 12d_6 + 16d_8 + 20d_{10} + \sum_{m \text{ even}} 2t_m + \sum_{m \text{ odd}} 2(m+1) t_m \le 2b_2^-(X).
    \end{equation}
    Plugging in the value of $b_2^-(X)$ from~\eqref{e:b2m-ADE+ordinary} above and rearranging, we get:
    \[
    a_5 + 2a_7 + d_8 + 2d_{10} + \sum_{\substack{m \ge 6 \\ m \text{ even}}} \frac{m-4}2t_m \le 2k + \frac\ell2 + t_2 + \sum_{m \text{ odd}} \frac{m-1}2t_m.\qedhere
    \]
\end{proof}

\begin{remark}
    Something stronger can be said on a case-by-case bases. In the proof above, one can avoid taking the internal connected sum of the components of $\tilde{C}$ and work with the whole strict transform instead of working with $R_C$.
    
    The two changes are the computation of the Euler characteristic and the first Betti number of the cover.
    To be more precise, let $b+1$ be the dimension of the space of linear relations modulo 2 among the components of $R' := \tilde{C} \cup R_D \cup R_O$ (see~\cite{RL}).
    Note that $b+1 \le k+\ell$, since the components of $R_D \cup R_O$ are linearly independent modulo 2.
    Let $X'$ be the double cover of $Y$ ramified over $R'$.
    Then
    \begin{align*}
    \chi(X') &= 6 - 2k - 2\ell + \sum_t 2ta_{2t-1} + \sum_t (2t+4)d_{2t+2} + \sum_{\substack{m \ge 4 \\ m \text{ even}}} 2t_m + \sum_{\substack{m \ge 5 \\ m \text{ odd}}} 2mt_m,\\
    b_1(X') &= b \le k + \ell - 1.
    \end{align*}
    The inequality one gets at the end of the day is:
    \[
    \frac32\ell + a_5 + 2a_7 + d_8 + 2d_{10} + \sum_{\substack{m \ge 6 \\ m \text{ even}}} \frac{m-4}2t_m \le 2 + 2b + t_2 + \sum_{m \text{ odd}} \frac{m-1}2t_m.
    \]
    The inequality in Theorem~\ref{t:random} covers the worst-case scenario, when $b + 1 = k+\ell$.
\end{remark}

\section{On curve singularities, double covers, and resolutions}\label{s:suspension}

In the proof of Theorem~\ref{t:random}, we also proved that if a plane curve $C\subset \CP$ of even degree has a singularity $(C,p)$ of type $A_{2k+1}$ or $D_{2k}$, then there exists a (possibly non-minimal) resolution $Y \to \CP$ of $(C,p)$ and a non-singular curve $\tilde{C} \subset R \subset \overline{C}$ such that:
\begin{itemize}
\item there exists a double cover $X \to Y$ ramified over $R$, and
\item $X$ contains a (possibly non-minimal) resolution of the corresponding $A_{2k+1}$- or $D_{2k}$-singularity.
\end{itemize}

In this section we prove that the same statement holds for \emph{every} singularity. The procedure can be made very explicit for the other simple singularities; namely for types $A_{2k}$, $D_{2k+1}$, $E_6$, $E_7$, and $E_8$.
We phrase the statements in local terms.

Since the local models for topological or smooth curve arrangements are analytic, we can always glue in the analytic construction and remain the same category (topological or smooth) in our constructions.

\begin{proposition}\label{p:ADE-resolutions}
Let $C \subset B^4 \subset \C^2$ be a complex curve with a unique singularity.
Then there exist a blow-up $Y\to B^4$ with exceptional divisor $E$ and a disjoint union of smooth spheres $R_0 \subset Y$ such that:
\begin{itemize}
    \item there is a double cover of $Y$ ramified over $\tilde{C} \cup R_0$;
    \item contracting the preimage of $E$ in $X$ gives a singular surface $S$ that suspends the singularity of $C$.
\end{itemize} 
\end{proposition}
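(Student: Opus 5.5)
The plan is to take $Y\to B^4$ to be the \emph{canonical embedded resolution} of $(C,0)$: the minimal sequence of blow-ups after which the total transform $\pi^*C = \tilde C + \sum_i a_i E_i$ is a simple normal crossing divisor. Here $E=\bigcup_i E_i$ is a tree of smooth rational curves and $a_i$ is the multiplicity of $f\circ\pi$ along $E_i$. We then set $R_0 := \bigcup_{a_i \text{ odd}} E_i$. The branch divisor $B = \tilde C \cup R_0$ is exactly the reduction modulo $2$ of the divisor of $f\circ\pi$. Since $H_1(\partial Y)$ has no influence (everything is local in $B^4$), the double cover we want is simply the normalisation of the pullback of $\{z^2 = f\}$. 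Concretely, $X$ is the normalisation of $Y\times_{B^4} S$, where $S=\{z^2+f(x,y)=0\}\subset \C^3$ is the double suspension. This construction gives existence of the cover for free: $z^2 = -f\circ\pi$ defines a double cover branched exactly along the components of odd multiplicity of $\operatorname{div}(f\circ\pi)$.

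The first issue is that $\tilde C\cup R_0$ must be a \emph{disjoint} union of smooth curves for $X$ to be a smooth manifold. In the normal crossing resolution, $\tilde C$ meets only components $E_i$ with $a_i$ odd only if parity allows. At a crossing point of two components with multiplicities $a,b$ in local coordinates $u^a v^b$, the normalisation of $z^2=u^av^b$ is smooth unless $a,b$ are both odd, in which case it has an $A_1$ point. Following the recipe already used in cases (d) and (o) of the proof of Theorem~\ref{t:random}, I would blow up once more at every crossing of two odd-multiplicity components of $\tilde C + \sum a_iE_i$. The new exceptional curve has multiplicity $a+b$, which is even, and it separates the two odd components. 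After finitely many such blow-ups, the odd-multiplicity part is a disjoint union of smooth curves, namely $\tilde C$ and some spheres $E_i$ forming $R_0$. The double cover $X\to Y$ branched over $\tilde C\cup R_0$ is then a smooth 4-manifold, and in fact a smooth complex surface. Locally it is given by $z^2 = u^a v^b$ with at most one exponent odd, whose normalisation is smooth.

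For the second bullet, the map $X\to Y\to B^4$ factors through $S$ by the universal property of normalisation, since $S$ is normal as an isolated hypersurface singularity. The composite $X\to S$ is proper and birational, and it is an isomorphism away from the preimage of $E$, because $\pi$ is an isomorphism off $E$ and the covers agree there. Hence $X\to S$ is a resolution of $S$ whose exceptional set is $\pi_X^{-1}(E)$. Contracting that preimage, which is compact and connected, recovers $S$ by Zariski's main theorem and normality of $S$; this is exactly the claim that we obtain the suspension.

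I expect the main obstacle to be making the second bullet meaningful in the topological and smooth categories the paper cares about, rather than purely analytically. I would handle it as the paper suggests just before the statement, by noting that all local models are analytic. The check that the branch locus becomes disjoint and smooth after the extra blow-ups is routine but needs the parity bookkeeping $a+b$ to be done carefully along chains. Finally, for the simple singularities I would make the procedure explicit afterwards by listing the $a_i$ on the dual graphs. For $A_{2k}$, $D_{2k+1}$, $E_6$, $E_7$ and $E_8$, this shows that the preimage of $E$ contains the ADE configuration of $(-2)$-curves, possibly with extra $(-1)$-curves.
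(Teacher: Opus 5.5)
Your proposal is correct and follows the paper's proof. You take the minimal SNC resolution, let the branch locus be the odd-multiplicity components (the paper says this as: the meridian of a new exceptional curve is sent to $m \bmod 2$), blow up once more at each crossing of two odd components, and identify $X\to S$ as a resolution. The one difference is that you get the analytic map $X\to S$ for free by defining $X$ as the normalisation of $Y\times_{B^4}S$, whereas the paper extends $X\setminus E\to S\setminus\{0\}$ across $E$ with the Riemann extension theorem.

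A caveat on your closing side remark, which is not needed for the statement: the preimage of $E$ need not literally contain the ADE configuration of $(-2)$-curves. For $A_2$ it consists of curves of square $-3,-3,-2,-1$. It is only a blow-up of the minimal resolution, which is how the paper phrases it.
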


Specialising to a singularity of type ADE, it can be also seen by hand (by implementing the algorithm given in the proof) that the corresponding divisor in $X$ is a resolution of an ADE singularity.

\begin{proof}
    For convenience, suppose that the singularity of $C$ is at $0 \in B^4$.
    Let $\pi_0 \colon Y_0 \to B^4$ be the minimal SNC-divisor resolution of $C$.
    That is to say $\pi_0^{-1}(C)$ is a simple normal crossing divisor (in particular, $\tilde{C}_0$ is non-singular).

    Suppose now that $D \subset \Delta$ is a germ of curve of multiplicity $m$. Note that $H_1(\Delta\setminus D) = \Z^c$, generated by the meridians of the components $D_1, \dots, D_c$ of $D$, and therefore there is a unique homomorphism $\pi_1(\Delta\setminus D) \to \Z/2\Z$ sending each such meridian to a non-zero element.
    This homomorphism corresponds to the (unique) double cover of $\Delta$ ramified over $D$.
    (In terms of equations, if $D$ is defined by the equation $f \in \C[x,y]$, the double cover is defined by $z^2 + f(x,y)$ as a subset of $\Delta\times\C$.)
    If we blow up $\Delta$ at the singularity of $D$, we obtain $\overline{D} = \tilde{D}\cup E \subset \tilde{\Delta}$.
    Of course, the projection $\tilde{\Delta} \to \Delta$ restricts to a diffeomorphism $\tilde{\Delta} \setminus \overline{D} = \Delta \setminus D$.
    We can view $H_1(\tilde{\Delta}\setminus \overline{D})$ as being generated by the meridians of $\tilde{D}$ and $E$. The main observation is that, under the identification $H_1(\tilde{\Delta}\setminus \overline{D}) \cong H_1(\Delta\setminus D) = \Z^c$ given by the projection, the meridian of $E$ is sent to $m \in \Z/2\Z$.

    The argument above gives a procedure to determine the ramification data of the double cover of $Y_0$ which is the pull-back of the double cover of $\Delta$ ramified over $C$. This double cover has singularities precisely where two divisors of $\tilde{C}_0$ whose meridians are sent to $1 \in \Z/2\Z$ intersect in $Y_0$.

    Blowing up once at these points we obtain $Y_0$ and the required ramification set $R$, which comprises all divisors in $\overline{C}$ whose meridians are sent to $1$.

    Note that there is a degree-2 map $p\colon X \to S$, defined as follows. Let $E \subset X$ be the preimage of $\pi^{-1}(0) \subset Y$.
    By construction, $X \setminus E$ double covers $\Delta \setminus \{0\}$ with ramification $C \setminus \{0\}$.
    Therefore, there is a map $p_0\colon X \setminus E \to S \setminus \{0\}$ (with a slight abuse of notation, we call $0 \in S$ the preimage of $0 \in \Delta$ by the double cover) and one can pull back the analytic structure on $S \setminus \{0\}$ to get an analytic map $X \setminus E \to S \setminus 0$.
    Note that if we extend $p_0$ by sending $E$ to $0 \in S$, we obtain a \emph{continuous} map $p \colon X \to S$.

    We now claim that $X$, the double cover of $Y$ ramified over $R$, is a resolution of $S$, the double cover of $\Delta$ ramified over $C$.
    This is a consequence of the Riemann Extension Theorem and of the negativity of contracting divisors.
    
    Since $p_0$ extends to the continuous map $p \colon X \to \Delta$, $p_0$ is in particular bounded along $E$, and by the Riemann Extension Theorem (see~\cite[Section~1.C]{GunningRossi}) it extends to an analytic function $X \to S$. Since this extension has to be continuous and the continuous extension is unique, $p_0$ is analytic.
    Now, since $X$ and $S$ are analytic spaces, the continuation is also analytic.
    We now have a proper analytic map $X\to S$ which is a biholomorphism away from $p^{-1}(0)$, which is therefore a resolution of the singularity of $S$.
    By uniqueness of the minimal resolution for surfaces (see~\cite[Section 2.4]{Nemethi-book}), we conclude that $E$ is a blow-up of the minimal resolution.
\end{proof}

\section*{Tool use}
No LLM was used during the writing of this paper.

\section*{Funding}
Piotr Pokora is supported by the National Science Centre (Poland) Sonata Bis Grant \textbf{2023/50/E/ST1/00025}. For the purpose of Open Access, the author has applied a CC-BY public copyright license to any Author Accepted Manuscript (AAM) version arising from this submission.


\begin{thebibliography}{99}

\bibitem[AG26]{AcetoGolla}
P.~Aceto and M.~Golla, Topological line arrangements with high multiplicities. Preprint: 
\url{http://arXiv.org/abs/2607.09398} (2026).

\bibitem[AMP26]{AcetoMcCoyPark}
P.~Aceto, D.~McCoy, J.~Park, Smooth realizations of line configurations. Preprint: \url{http://arXiv.org/abs/2607.09439} (2026).

\bibitem[ACT08]{Artaletal-Survey}
E.~Artal Bartolo, J.I.~Cogolludo, H-o.~Tokunuga, 
A survey on Zariski pairs. Konno, Kazuhiro (ed.) et al., Algebraic geometry in East Asia—Hanoi 2005. Proceedings of the 2nd international conference on algebraic geometry in East Asia, Hanoi, Vietnam, October 10–14, 2005. Tokyo: Mathematical Society of Japan. Advanced Studies in Pure Mathematics \textbf{50}, 1--100 (2008).

\bibitem[AS68]{AtiyahSinger}
M.~F. Atiyah and I.~M. Singer, The index of elliptic operators: III. \textit{Ann. of Math. (2)} \textbf{87}, 546--604 (1968).

\bibitem[BFLL18]{BaaderFellerLewarkLiechti}
S.~Baader, P.~Feller, L.~Lewark, L.~Liechti, On the topological 4-genus of torus knots.  Trans. Am. Math. Soc. \textbf{370}(4), 2639--2656 (2018).

\bibitem[Boj03]{Bojanowski}
R. Bojanowski, Zastosowania Uog\'olnionej Nier\'owno\'sci Bogomolova-Miyaoka-Yau, Master Thesis (in Polish), 2003. Available at \url{http://www.mimuw.edu.pl/~alan/postscript/bojanowski.ps}.

\bibitem[Bri68]{Brieskorn}
E.~Brieskorn, Die Aufl\"osung der rationalen Singularit\"aten holomorpher Abbildungen. \textit{Math. Ann.} \textbf{178}, 255--270 (1968).

\bibitem[BJP26]{BromboszczJaroslawskiPokora}
A.~Bromboszcz, B.~Jaros\l awski, and P.~Pokora, On plus--one generated arrangements of plane conics. \textit{Geom. Dedicata} \textbf{220}, Paper No.~10 (2026).

\bibitem[DP23]{DimcaPokoraMaximizing}
A. Dimca and P. Pokora, Maximizing curves viewed as free curves. \textit{Int. Math. Res. Not. IMRN}, \textbf{2023 (22)}, 19156--19183 (2023).

\bibitem[dPW99]{duP}
A. du Plessis and C. T. C. Wall, Application of the theory of the discriminant to highly singular plane curves. \textit{Math. Proc. Camb. Philos. Soc.} \textbf{126(2)}, 259--266 (1999).

\bibitem[Dur79]{Durfee}
A.~H. Durfee, Fifteen characterizations of rational double points and simple critical points. \textit{Enseign. Math. (2)} \textbf{25}, 131--163 (1979).

\bibitem[Fur01]{Furuta}
M. Furuta, Monopole equation and the $11/8$-conjecture. \textit{Math. Res. Lett.} \textbf{8}, 279--291 (2001).

\bibitem[Gil81]{Gilmer}
P. Gilmer, Configurations of surfaces in 4-manifolds. \textit{Trans. Amer. Math. Soc.} \textbf{264}(2), 353--380 (1981).

\bibitem[Gol24]{G-odd}
M. Golla, On line arrangements with odd multiplicities. \textit{New York J. Math.} \textbf{30}, 1643--1647 (2024).

\bibitem[GR65]{GunningRossi}
R. C. Gunning and H. Rossi, Analytic Functions of Several Complex Variables. Modern Analysis. Englewood Cliffs, NJ: Prentice-Hall (1965).

\bibitem[Hir82]{Hirzebruch1}
F. Hirzebruch, Some examples of algebraic surfaces. \textit{Contemp. Math}. \textbf{9}: 55--71 (1982).

\bibitem[Hir83]{Hirzebruch}
F.~Hirzebruch, Arrangements of lines and algebraic surfaces. in: Arithmetic and Geometry, Vol.~II, Progr. Math. 36, Birkh\"auser, Boston, 1983, 113--140.

\bibitem[HSz71]{HsiangSzczarba}
W.~C. Hsiang and R.~H. Szczarba, On embedding surfaces in four-manifolds. in: Algebraic Topology, Proc. Sympos. Pure Math. \textbf{22}, Amer. Math. Soc., 1971, 97--103.

\bibitem[Lan03]{Langer}
A.~Langer, Logarithmic orbifold Euler numbers of surfaces with applications. 
Proc. Lond. Math. Soc., III. Ser. \textbf{86(2)}, 358--396 (2003).

\bibitem[LW90]{LeeWilczynski}
R.~Lee and D.~M.~Wilczynski, Locally flat 2-spheres in simply connected 4-manifolds. Comment. Math. Helv. \textbf{65}(3), 388--412 (1990).

\bibitem[KM61]{KervaireMilnor}
M.~A.~Kervaire and J.~W.~Milnor, On 2-spheres in 4-manifolds. Proc. Natl. Acad. Sci. USA \textbf{47}, 1651--1657 (1961).

\bibitem[Kob90]{Kobayashi}
R.~Kobayashi, An application of K\"ahler--Einstein metrics to singularities of plane curves. in: Recent Topics in Differential and Analytic Geometry,
Adv. Stud. Pure Math. \textbf{18-I}, 321--326 (1990).

\bibitem[KM94]{KronheimerMrowka}
P.~B.~Kronheimer and T.~S.~Mrowka, The genus of embedded surfaces in the projective plane. Math. Res. Lett. \textbf{1}(6), 797--808 (1994).


\bibitem[N\'em22]{Nemethi-book}
A. N\'emethi, Normal surface singularities, Ergebnisse der Mathematik und ihrer Grenzgebiete, 3. Folge, vol. \textbf{74}, Springer-Verlag, Berlin (2022).

\bibitem[Per82]{Persson}
U. Persson, Horikawa surfaces with maximal Picard numbers. \textit{Math. Ann.} \textbf{259}: 287--312 (1982).

\bibitem[Pok17]{Pokora-conicline-BMY}
P. Pokora, The orbifold Langer--Miyaoka--Yau inequality and Hirzebruch-type inequalities. Electron. Res. Announc. Math. Sci. \textbf{24}, 21--27 (2017).


\bibitem[Pok21]{Pokora}
P. Pokora, Hirzebruch-type inequalities viewed as tools in combinatorics. 
\textit{Electron. J. Comb.} \textbf{28}(1): Paper No. P1.9, 22 p. (2021).


\bibitem[RS19]{RL}
D. Ruberman and L. Starkston, Topological realizations of line arrangements. \textit{Int. Math. Res. Not.} \textbf{2019(8)}: 2295--2331 (2019).

\end{thebibliography}
\end{document}